\newtheorem*{theorem*}{Theorem}
\newtheorem{theorem}{Theorem}
\newtheorem{proposition}[theorem]{Proposition}
\newtheorem{lemma}[theorem]{Lemma}
\theoremstyle{definition}
\newtheorem{example}{Example}
\newtheorem{remark}{Remark}
\renewcommand{\epsilon}{\varepsilon}
\renewcommand{\phi}{\varphi}
\def\<{\left\langle}
\def\>{\right\rangle}
\title[Reciprocal Transformations]{Reciprocal Transformations and Their Discrete Maharam Extensions}
\author{Chris Johnson}
\address{Department of Mathematics and Computer Science, Western
  Carolina University, Cullowhee, North Carolina 28723}
\email{ccjohnson@gmail.com}
\subjclass[2020]{Primary 37A40; Secondary 37E05, 37E35}
\keywords{Ergodic theory, nonsingular transformations, interval
  exchange transformations, translation surfaces}
\date{\today}
\begin{document}
\maketitle

\begin{abstract}
  We introduce two abstract constructions for building new measurable
  dynamical systems from existing ones and study their ergodic
  properties.  The first of these constructions, a ``reciprocal
  transformation,'' produces a type of non-singular transformation
  where the measures of subsets are distorted in a simple way.  We
  then introduce the ``discrete Maharam extension'' which associates
  an infinite measure-preserving transformation to each reciprocal
  transformation.  We give some preliminary results about the ergodic
  theory of each of these constructions, mention ongoing work, as well
  as conjectures and questions for future research.
\end{abstract}

\section{Introduction}
In this paper we consider two simple constructions and explore ergodic
theoretic properties of these constructions.  We will also mention
some possible applications of these constructions to the theory of
interval exchanges and translation surfaces.

The first construction we introduce considers non-singular
transformations that arise by composing a measure-preserving
transformation with a special map we call a ``scaling involution'' to
produce what we will refer to as a ``reciprocal transformation.''
This composition gives us a map which no longer preserves measure, but
where the measure is distorted in an easy-to-understand way.  This
gives us a family of non-singular transformations which we believe are
similar enough to measure-preserving transformations that in some
particular instances the dynamics can be easily understood.

The second construction gives a method for associating a
measure-preserving transformation, albeit on a space of infinite
measure, to each reciprocal transformation.  As we will see, ergodic
properties of these infinite transformations are reflected in the
dynamics of the finite non-singular transformations.

While the definitions and theorems given in this article are expressed
in terms of abstract ergodic theory, we will consider as running
examples the special cases when the initial measure-preserving
transformation is an interval exchange transformation, and the scaling
involution is a piecewise affine involution of an interval.  Indeed,
the initial goal of these constructions was to provide a method of
producing infinite ``self-similar'' interval exchanges which could
easily be seen to be ergodic, and then consider the (ergodic) vertical
flow on translation surfaces of infinite area which arise as their
suspensions.  We note here that while the theory of compact
translation surfaces has seen tremendous progress in the last two
decades, non-compact translation surfaces are not as well understood,
and so a method for producing infinite-area translation surfaces with
an ergodic vertical flow seems to be an interesting problem.

Section~\ref{sec:Background} below contains a quick summary of the
pertinent background.  After very briefly mentioning some classical
ergodic theory of a measure-preserving transformation on a probability
space, we mention non-singular transformations and ergodic theory for
a space of infinite measure.  We then give a quick introduction to
interval exchange transformations and translation surfaces.
References to more thorough presentations are provided.  In
Sections~\ref{sec:ReciprocalTransformations} and
\ref{sec:DiscreteMaharam} we introduce our two constructions and prove
some basic theorems about their dynamics.  Section~\ref{sec:Future}
discusses ongoing work related to the Krieger type of the reciprocal
transformations.

\section{Background}
\label{sec:Background}
For the convenience of the reader, in this section we give some very
basic background from ergodic theory, as well as the theory of
interval exchanges and translation surfaces.  For more details on
standard ergodic theory, see \cite{Petersen} and \cite{Walters}.
Information on infinite ergodic theory can be found in
\cite{Aaronson}, while \cite{DanilenkoSilva} and the references
therein contain information about non-singular ergodic theory.
Finally, introductions to interval exchange transformations and
translation surfaces can be found in \cite{Viana}, \cite{Vorobets},
\cite{Wright}, and \cite{Zorich}.

\subsection{Standard ergodic theory}
Let $(X, \mathcal{B}, \mu)$ be a probability space.  We say that a map
$F: X \to X$ is a \emph{measure-preserving transformation (with
respect to $\mu$)} if $F$ is a measurable bijection and for each
measurable subset $E$ we have $\mu(E) = \mu(F(E))$.  All definitions
throughout this article will be up to a set of measure zero.  That is,
if $F$ was measurable and not a bijection from $X$ to itself, but
if there existed a null set $N$ such that the restriction of $F$ to
$X\setminus N$ is a bijection, we will still refer to $F$ as a
``measurable bijection.''

One of the most fundamental properties of measure-preserving
transformations is given by the famous Poincar\'e recurrence theorem:

\begin{theorem*}[Poincar\'e recurrence theorem]
  Let $F$ be a measure-preserving transformation of a probability
  space $(X, \mathcal{B}, \mu)$.  For every set $E$ of positive
  measure and for almost every $x \in E$, there exists an $n$ such
  that $F^n(x) \in E$.
\end{theorem*}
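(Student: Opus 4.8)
The plan is to isolate the ``bad set'' of non-returning points and show it is null. Given $E$ with $\mu(E)>0$, set
\[
  N = \{\, x \in E : F^n(x) \notin E \text{ for every } n \geq 1 \,\},
\]
the collection of points of $E$ that never return to $E$. It suffices to prove $\mu(N)=0$, since then almost every $x\in E$ lies outside $N$, which is exactly the assertion of the theorem. First I would note that $N$ is measurable, being the countable intersection $N = E \cap \bigcap_{n\geq 1} F^{-n}(X\setminus E)$; here I use that $F$, being a measurable bijection, has a measurable inverse, so each $F^{-n}(X\setminus E)$ is measurable.

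The key step is to observe that the sets $N, F^{-1}(N), F^{-2}(N), \ldots$ are pairwise disjoint. Suppose instead that $x \in F^{-m}(N) \cap F^{-n}(N)$ for some $0 \leq n < m$. Put $y = F^n(x)$, so that $y \in N \subseteq E$; but also $F^{m-n}(y) = F^m(x) \in N \subseteq E$ with $m-n \geq 1$, contradicting the defining property of $N$ (no point of $N$ returns to $E$). Hence the iterated preimages of $N$ are mutually disjoint.

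To finish, I would invoke the hypothesis that $F$ is measure-preserving: since $F$ is a bijection with $\mu(F(A)) = \mu(A)$ for all measurable $A$, applying this with $A$ replaced by $F^{-1}(A)$ gives $\mu(F^{-1}(A)) = \mu(A)$, and iterating, $\mu(F^{-n}(N)) = \mu(N)$ for every $n \geq 0$. The sets $F^{-n}(N)$ are disjoint subsets of the probability space $X$, so
\[
  \sum_{n=0}^{\infty} \mu(N) = \sum_{n=0}^{\infty} \mu\bigl(F^{-n}(N)\bigr) \leq \mu(X) = 1 < \infty,
\]
which forces $\mu(N) = 0$, as desired.

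There is no serious obstacle here: the whole argument rests on the disjointness observation together with the elementary fact that a finite measure space cannot contain infinitely many pairwise disjoint sets all of the same positive measure. The only point requiring a little care is the bookkeeping between forward images and preimages, which is painless in the present setting precisely because the definition of a measure-preserving transformation used in this paper builds in invertibility, so that $\mu \circ F^{-1} = \mu$.
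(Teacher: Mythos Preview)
Your argument is correct and is the standard proof of Poincar\'e recurrence. Note, however, that the paper does not actually supply a proof of this statement: it is quoted in Section~\ref{sec:Background} as classical background (with references to \cite{Petersen} and \cite{Walters}) and left unproved. So there is no ``paper's own proof'' to compare against; your write-up simply fills in the omitted classical argument, and it does so cleanly. The only minor remark is that the measurability of $N$ does not really require bi-measurability of $F$: each $F^{-n}(X\setminus E)$ is measurable just because $F$ is measurable, so the aside about $F$ having a measurable inverse is unnecessary (though harmless).
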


In language to be described in the next subsection below, the
Poincar\'e recurrence theorem states that every measure-preserving
transformation of a probability space is ``conservative.''

We say that a measurable set $E$ is \emph{$F$-invariant} if $F(E) = E$
(again, we allow this statement to hold up to a null set).  The
transformation $F$ is said to be \emph{ergodic} if the only invariant
sets have measure zero or measure one.

\subsection{Non-singular ergodic theory}
As above, continue to suppose that $(X, \mathcal{B}, \mu)$ is a
probability space and that $F : X \to X$ is a measurable bijection.
We say that $F$ is a \emph{non-singular transformation (with respect
to $\mu$)} if $F$ preserves the $\sigma$-ideal of $\mu$-null sets: i.e.,
$\mu \ll F_* \mu \ll \mu$.  That is, given a measurable set $E$,
$\mu(E) = 0$ if and only if $\mu(F(E)) = 0$ as well; $F$ may distort
the measure of a set, but it must respect whether or not a set has
positive or zero measure.

Unlike measure-preserving transformations, non-singular
transformations need not satisfy the conclusion of the Poincar\'e
recurrence theorem.  To be more precise, we must define ``wandering
sets,'' and the ``conservative'' and ``dissipative'' parts of the
transformation.

A set $W$ is said to be \emph{wandering} under $F$ if it is disjoint
from all of its images: $W \cap F^n(W) = \emptyset$.  The union of all
wandering sets is called the \emph{dissipative part} of $F$ and is
denoted $\mathfrak{D}$.  The complement of $\mathfrak{D}$ is the
\emph{conservative part} of $F$ and is denoted $\mathfrak{C}$.  If
$\mathfrak{C}$ has full measure, then $F$ is called
\emph{conservative}.  If $\mathfrak{D}$ has full measure, then $F$ is
called \emph{totally dissipative}.  Notice that both $\mathfrak{C}$
and $\mathfrak{D}$ are $F$-invariant sets.  The decomposition of $X$
into the conservative and dissipative parts of $F$ is known as the
\emph{Hopf decomposition} of $F$.

The notion of ergodicity makes sense for non-singular transformations
and is defined just as in the measure-preserving case: we say a
non-singular transformation $F$ is \emph{ergodic (with respect to
$\mu$)} if each $F$-invariant set has full measure or zero measure.
As shown in \cite[Proposition~1.2.1]{Aaronson}, if $F$ is
a non-singular bijection and $\mu$ is non-atomic, then $F$ must be
conservative to be ergodic.  Throughout this article, all measures
considered will be assumed to be non-atomic, and so conservativity
will be a prerequisite for ergodicity.

\subsection{Infinite ergodic theory}
The notion of a measure-preserving transformation easily extends to
measure spaces of infinite measure, but the definition of ergodicity
must be rephrased slightly.  Instead of simply requiring that an
invariant set have full (infinite) measure, we should require that the
measure of an invariant set or its complement is zero.  We note too
that measure-preserving transformations of infinite measure spaces
need not be conservative: the conclusion of the Poincar\'e recurrence
theorem does not necessarily hold.  Consider, for example, the map $Fx
= x + 1$ defined on the real line which preserves Lebesgue
measure, but is totally dissipative.

\subsection{Interval exchanges and translation surfaces}
An \emph{interval exchange transformation} is a bijection from an
interval to itself which is a piecewise translation with finitely-many
intervals of continuity.  Each interval exchange is determined by a
permutation describing how the intervals of continuity are exchanged
by the transformation, together with a vector of length data giving
the length of each interval of continuity.  These transformations are
often described pictorially by giving an image where each interval of
continuity appears as a subsegment of the interval, and the image of
each interval appears on an interval below.  For example,
Figure~\ref{fig:iet} describes an interval exchange on the interval $X
= [0, 1)$ where the intervals of continuity are $[0, 0.3)$, $[0.3,
0.8)$, and $[0.8, 1)$ which are denoted $A$, $B$, and $C$.  The
transformation $T : X \to X$ indicated in Figure~\ref{fig:iet}
reverses the order of the intervals so that $T(A) = [0.7, 1)$, $T(B) =
[0.2, 0.7)$,
and $T(C) = [0, 0.2)$.

\begin{figure}[h!]
  \centering
    \begin{tikzpicture}[xscale=7]

      \foreach \x in {0, 0.3, 0.8, 1} {
        \draw (\x, 0.3) -- (\x, 0.7);
      }

      \foreach \x in {0, 0.2, 0.7, 1} {
        \draw (\x, -0.3) -- (\x, -0.7);
      }

      \draw[ultra thick, black!25!white] (0, 0.5) -- node[above] {$A$} (0.3, 0.5);
      \draw[ultra thick, black!75!white] (0.3, 0.5) -- node[above] {$B$} (0.8, 0.5);
      \draw[ultra thick, black!50!white] (0.8, 0.5) -- node[above] {$C$} (1, 0.5);
      
      \draw[ultra thick, black!25!white] (0.7, -0.5) -- node[below] {$A$} (1, -0.5);
      \draw[ultra thick, black!75!white] (0.2, -0.5) -- node[below] {$B$} (0.7, -0.5);
      \draw[ultra thick, black!50!white] (0, -0.5) -- node[below] {$C$} (0.2, -0.5);
      
      

      

      
  \end{tikzpicture}
    \caption{An example of an interval exchange with three intervals
      of continuity.}
    \label{fig:iet}
\end{figure}

By considering certain suspensions of an interval exchange, we can
obtain a \emph{translation surface}, which is a surface endowed with
singular flat metric with trivial holonomy.  These surfaces have a
well-defined notion of direction at each point, aside from a finite
amount of ambiguity at the singularities.  That is, the notion of
``vertical or ``horizontal'' or ``with slope $m$'' makes sense at
all but finitely-many points on the surface, and this allows us to
have a well-defined directional flow on the surface (minus the measure
zero subset consisting of points where the flow would hit a
singularity).  The interval on which the interval exchange is defined
then appears as a geodesic segment on the surface, and the
first-return map of the vertical flow on the surface to that geodesic
segment recovers the original interval exchange.

To be precise about the construction of a translation surface as a
suspension over an interval exchange, we consider a complex vector
$\zeta$ with dimension equal to the number of intervals of continuity.
Suppose the $n$-th component of $\zeta$
has real part equal to the width of the $n$-th interval of continuity
of the interval exchange.  Now consider a broken line emanating from
the left-hand endpoint of the interval consisting of a concatenation
of segments described by $\zeta$.  We then consider the corresponding
broken line given by permuting the entries of $\zeta$ as described
by the interval exchange.  Identifying the segments corresponding to
the same entry of $\zeta$ gives the resulting interval exchange.  In
Figure~\ref{fig:translation_surface} we see the surface obtained by
the construction just described using the interval exchange denoted in
Figure~\ref{fig:iet} and the vector $\zeta = (0.3 + i, 0.5 + 0.2i, 0.2
- i)$.

\begin{figure}[h!]
  \centering
  \begin{tikzpicture}
    \begin{scope}[xscale=7]

      \draw[fill=black!25!white,draw=black!25!white] (0, 0) -- (0.3, 0) -- (0.3, 0.8) --
      (0.24, 0.8) -- cycle;
      \draw[fill=black!25!white,draw=black!25!white] (0.7, -0.8) -- (0.7,
      0) -- (0.94, 0) -- cycle;

      \draw[fill=black!75!white,draw=black!75!white] (0.3, 0) -- (0.3,
      1) -- (0.8, 1.2) -- (0.8, 0) -- cycle;
      \draw[fill=black!75!white,draw=black!75!white] (0.2, -1) -- (0.2,
      0) -- (0.7, 0) -- (0.7, -0.8) -- cycle;

      \draw[fill=black!50!white,draw=black!50!white] (0.8, 0) -- (0.8,
      1.2) -- (1, 0.2) -- (0.94, 0) -- cycle;
      \draw[fill=black!50!white,draw=black!50!white] (0.24, 0.8) --
      (0.3, 1) -- (0.3, 0.8) -- cycle;
      \draw[fill=black!50!white,draw=black!50!white] (0, 0) -- (0.2,
      -1) -- (0.2, 0) -- cycle;
      
      \draw[black,thick] (0, 0) -- (0.94, 0);
      \draw[black,thick] (0.24, 0.8) -- (0.3, 0.8);
      
      \draw[ultra thick, black!25!white] (0, 0) -- node[above] {$A$} (0.3, 1);
      \draw[ultra thick, black!75!white] (0.3, 1) -- node[above] {$B$} (0.8,
      1.2);
      \draw[ultra thick, black!50!white] (0.8, 1.2) -- node[above] {$C$} (1,
      0.2);
      \draw[ultra thick, black!50!white] (0, 0) -- node[below] {$C$} (0.2, -1);
      \draw[ultra thick, black!75!white] (0.2, -1) -- node[below] {$B$} (0.7,
      -0.8);
      \draw[ultra thick, black!25!white] (0.7, -0.8) -- node[below] {$A$} (1,
      0.2);

    \end{scope}
  \end{tikzpicture}
  \caption{A translation surface obtained as a suspension over the
    interval exchange described in Figure~\ref{fig:iet}.}%
  \label{fig:translation_surface}
\end{figure}

In addition to interval exchanges which preserve the Lebesgue measure
of an interval, we will also be concerned with \emph{affine interval
exchanges} which are non-singular transformations of an interval given
by a piecewise affine bijection of the interval to itself.  Just as
interval exchanges can be denoted pictorially, affine interval
exchanges can be denoted in a similar way, such as in
Figure~\ref{fig:affiet_example}.
  
\begin{figure}[h!]
  \centering
    \begin{tikzpicture}[xscale=7]

      \foreach \x in {0, 0.3, 0.8, 1} {
        \draw (\x, 0.3) -- (\x, 0.7);
      }

      \foreach \x in {0, 0.1, 0.5, 1} {
        \draw (\x, -0.3) -- (\x, -0.7);
      }

      \draw[ultra thick, black!25!white] (0, 0.5) -- node[above] {$A$} (0.3, 0.5);
      \draw[ultra thick, black!75!white] (0.3, 0.5) -- node[above] {$B$} (0.8, 0.5);
      \draw[ultra thick, black!50!white] (0.8, 0.5) -- node[above] {$C$} (1, 0.5);
      
      \draw[ultra thick, black!25!white] (0.5, -0.5) -- node[below] {$A$} (1, -0.5);
      \draw[ultra thick, black!75!white] (0.1, -0.5) -- node[below] {$B$} (0.5, -0.5);
      \draw[ultra thick, black!50!white] (0, -0.5) -- node[below] {$C$} (0.1, -0.5);
      
      

      

      
  \end{tikzpicture}
    \caption{An example of an affine interval exchange with three intervals
      of continuity.  The intervals are permuted just as in the
      interval exchange in Figure~\ref{fig:iet}, but the lengths of
      the intervals can be distorted.}
    \label{fig:affiet_example}
\end{figure}

\section{Reciprocal Transformations}
\label{sec:ReciprocalTransformations}
\subsection{Introduction and an example}
Let $(X, \mathcal{B}, \mu)$ be a probability space.  We call a
measurable bijection $\Phi : X \to X$ a \emph{scaling involution} if
$\Phi^2 = \mathrm{id}$ and there exists a set $S$ of positive measure
such that the following conditions are satisfied:
\begin{itemize}
\item $X = S \sqcup \Phi(S)$
\item $\mu(S) < \mu(\Phi(S))$ ($S$ is the smaller set)
\item For each measurable $E \subseteq S$, $\mu(\Phi(E)) =
  \frac{\mu(\Phi(S))}{\mu(S)} \mu(E)$.
\end{itemize}
That is, $\Phi$ simply exchanges $S$ and $\Phi(S)$, uniformly scaling
the measure of subsets in $S$ or $\Phi(S)$.  Notice that since $\Phi$
is an involution, the third condition above implies that for each
measurable $E \subseteq \Phi(S)$ we have $\mu(\Phi(E)) =
\frac{\mu(S)}{\mu(\Phi(S))} \mu(E)$.

For notational convenience, we let $\rho$ denote the ratio
$\frac{\mu(\Phi(S))}{\mu(S)}$.  Observe that sets $E \subseteq
\Phi(S)$ are scaled by a factor of $\rho^{-1}$ under the
application of $F$.  The last condition in the definition of a
scaling involution can also be described as requiring the
Radon-Nikodym derivative $\frac{dF_*\mu}{d\mu}$ equal $\rho$ at each
point of $S$ and $\rho^{-1}$ at each point of $\Phi(S)$.

\begin{example}
  \label{ex:scaling_third}
  If $X = [0, 1)$ equipped with the Lebesgue measure $\mu$, then the
    affine map exchanging $S = [0, \sfrac{1}{3})$ and $\Phi(S) =
      [\sfrac{1}{3}, 1)$ is a scaling involution with ratio $\rho =
        2$.
\end{example}

By a \emph{reciprocal transformation} we mean the composition $F =
\Phi T$ where $T$ is a measure-preserving transformation and $\Phi$ is
a scaling involution on a probability space.

\begin{remark}
  We observe that in defining the reciprocal transformation we made a
  choice to consider the composition $F = \Phi T$, but could have
  instead considered the other composition, $G = T \Phi$.  We
  observe, though, that these maps are conjugate to one another with
  $\Phi$ being the conjugating map:
  \[
  \Phi F = \Phi (\Phi T) = T = (T \Phi) \Phi = G \Phi.
  \]
  Thus dynamical properties of $F$ such conservativity and ergodicity
  hold if and only if $G$ also has these properties.  Throughout this
  article we will focus on the composition $F = \Phi T$ for concreteness.
\end{remark}

\begin{example}
  \label{ex:pair_rotation}
  Consider the
  case when $X = [0, 1)$ is an interval, and $T : X \to X$ is the
  interval transformation denoted in Figure~\ref{fig:iet_example}.
  This transformation is a piecewise translation with four intervals
  of continuity denoted $A$, $B$, $C$, and $D$.  The transformation
  simply acts as a pair of circle rotations, rotating $A \cup B$ by
  the length of $B$, and rotating $C \cup D$ by the length of $D$.
  \begin{figure}[h!]
    \centering
    \begin{tikzpicture}[xscale=7]

      \foreach \x in {0, 0.333, 1} {
        \draw (\x, 0.3) -- (\x, 0.7);
      }

      \foreach \x in {0, 0.333, 1} {
        \draw (\x, -0.3) -- (\x, -0.7);
      }
      
      \foreach \x in {0, 0.242, 1} {
        \draw (\x, 0.3) -- (\x, 0.7);
      }
      
      \foreach \x in {0, 0.091, 1} {
        \draw (\x, -0.3) -- (\x, -0.7);
      }

      \foreach \x in {0, 0.707, 1} {
        \draw (\x, 0.3) -- (\x, 0.7);
      }
      
      \foreach \x in {0, 0.626, 1} {
        \draw (\x, -0.3) -- (\x, -0.7);
      }

      \draw[ultra thick, black!25!white] (0, 0.5) -- node[above] {$A$} (0.242, 0.5);
      \draw[ultra thick, black!75!white] (0.242, 0.5) -- node[above] {$B$} (0.333, 0.5);
      \draw[ultra thick, black!50!white] (0.333, 0.5) -- node[above] {$C$} (0.707, 0.5);
      \draw[ultra thick, black] (0.707, 0.5) -- node[above] {$D$} (1, 0.5);
      
      \draw[ultra thick, black!75!white] (0, -0.5) -- node[below] {$B$} (0.091, -0.5);
      \draw[ultra thick, black!25!white] (0.091, -0.5) -- node[below] {$A$} (0.333, -0.5);
      \draw[ultra thick, black] (0.333, -0.5) -- node[below] {$D$} (0.626, -0.5);
      \draw[ultra thick, black!50!white] (0.626, -0.5) -- node[below] {$C$} (1, -0.5);
    \end{tikzpicture}
    \caption{An interval exchange transformation $T$ with
      four intervals permuted by $(A, B)(C, D)$.}
    \label{fig:iet_example}
  \end{figure}
  Now take $\Phi$ to be the affine piecewise involution which
  exchanges $S = A \cup B$ and $\Phi(S) = C \cup D$, described in
  Figure~\ref{fig:phi_example}.
  \begin{figure}[h!]
    \centering
    \begin{tikzpicture}[xscale=7]

      \foreach \x in {0, 0.333, 1} {
        \draw (\x, 0.3) -- (\x, 0.7);
      }
      
      \foreach \x in {0, 0.333, 1} {
        \draw (\x, -0.3) -- (\x, -0.7);
      }
      
      \draw[ultra thick, black] (0, 0.5) -- node[above] {$S$} (0.333, 0.5);
      \draw[ultra thick, black!50!white] (0.333, 0.5) -- node[above] {$\Phi S$} (1, 0.5);
      \draw[ultra thick, black!50!white] (0, -0.5) -- node[below] {$\Phi S$} (0.333, -0.5);
      \draw[ultra thick, black] (0.333, -0.5) -- node[below] {$S$} (1, -0.5);
    \end{tikzpicture}
    \caption{A scaling involution $\Phi$ exchanging $S$ and $\Phi(S)$.}
    \label{fig:phi_example}
  \end{figure}
  The resulting composition $F = \Phi T$ is the affine interval
  exchange indicated in Figure~\ref{fig:aiet_example}.
  \begin{figure}[h!]
    \centering
    \begin{tikzpicture}[xscale=7]

    \foreach \x in {0, 0.242, 0.333, 0.707, 1} {
      \draw (\x, 0.3) -- (\x, 0.7);
    }
    
    \foreach \x in {0, 0.146, 0.333, 0.516, 1} {
      \draw (\x, -0.3) -- (\x, -0.7);
    }

    \draw[ultra thick, black!25!white] (0, 0.5) -- node[above] {$A$} (0.242, 0.5);
    \draw[ultra thick, black!75!white] (0.242, 0.5) -- node[above] {$B$} (0.333, 0.5);
    \draw[ultra thick, black!50!white] (0.333, 0.5) -- node[above] {$C$}
    (0.707, 0.5);
    \draw[ultra thick, black] (0.707, 0.5) -- node[above] {$D$}
    (1, 0.5);
    \draw[ultra thick, black!25!white] (0.516, -0.5) -- node[below] {$A$} (1, -0.5);
    \draw[ultra thick, black!75!white] (0.333, -0.5) -- node[below] {$B$} (0.516, -0.5);

    \draw[ultra thick, black!50!white] (0.146, -0.5) -- node[below] {$C$} (0.333, -0.5);
    \draw[ultra thick, black] (0, -0.5) -- node[below] {$D$} (0.146,
    -0.5);
    \end{tikzpicture}
    \caption{An affine interval exchange $F$ obtained by composing
      an interval exchange $T$ with a scaling involution $\Phi$.}
    \label{fig:aiet_example}
  \end{figure}
\end{example}

This simple construction gives us a way to build non-singular
transformations where we can use information about the dynamics of the
measure-preserving $T$ to glean information about the dynamics of $F$.
One of our primary goals is to determine what conditions on $\Phi$ and
$T$ are sufficient to ensure that the reciprocal transformation $F = \Phi T$ is
ergodic.

\subsection{Conservativity and the first-return to $S$}
As we will be interested in the ergodicity of reciprocal
transformations and conservativity is prerequisite for ergodicity when
$\mu$ is non-atomic, we must first understand when a reciprocal
transformation is conservative.  To this end it will be helpful to
study when the first-return map to $S$ (the smaller of the two sets
exchanged by the scaling involution $\Phi$) is defined.  Observe that
we \emph{are not} currently supposing the reciprocal transformation $F
= \Phi T$ is conservative, and so \emph{a priori} the first-return map
to an arbitrary subset need not be defined.  As we will see, however,
the first-return map of a reciprocal transformation to $S$ \emph{is}
always defined.

We first observe that when $F$ is applied, any set whose image is in
$S$ will have its measure shrink by a factor of $\rho^{-1}$, and any
set whose image is in $\Phi(S)$ will have its measure grow by a factor
of $\rho$.  This is illustrated in Example~\ref{ex:wanderingFS} below.

\begin{example}
  \label{ex:wanderingFS}
  Consider the interval exchange $T$ indicated in
  Figure~\ref{fig:wanderingIET} where each interval has length $1/9$
  or $2/9$.
  \begin{figure}[h!]
    \centering
    \begin{tikzpicture}[xscale=7]
      \foreach \x in {0, 0.1111, 0.3333, 0.7777, 1} {
        \draw (\x, 0.3) -- (\x, 0.7);
      }

      \foreach \x in {0, 0.2222, 0.3333, 0.7777, 1} {
        \draw (\x, -0.3) -- (\x, -0.7);
      }

      \draw[ultra thick, black!25!white] (0, 0.5) -- node[above] {$A$} (0.1111, 0.5);
      \draw[ultra thick, black!75!white] (0.1111, 0.5) -- node[above] {$B$} (0.3333, 0.5);
      \draw[ultra thick, black] (0.3333, 0.5) -- node[above] {$C$} (0.777, 0.5);
      \draw[ultra thick, black!50!white] (0.7777, 0.5) -- node[above] {$D$} (1, 0.5);

      \draw[ultra thick, black!50!white] (0, -0.5) -- node[below] {$D$} (0.2222, -0.5);
      \draw[ultra thick, black!25!white] (0.2222, -0.5) -- node[below] {$A$} (0.3333, -0.5);
      \draw[ultra thick, black] (0.3333, -0.5) -- node[below] {$C$} (0.7777, -0.5);
      \draw[ultra thick, black!75!white] (0.7777, -0.5) -- node[below] {$B$} (1, -0.5);
    \end{tikzpicture}
    \caption{An interval exchange transformation.}
    \label{fig:wanderingIET}
  \end{figure}
  Composing this map with the $\Phi$ exchanging $[0, 1/3)$ and $[1/3,
      1)$ gives us the affine interval exchange indicated $F$
      indicated in Figure~\ref{fig:composition}.
  \begin{figure}[h!]
    \centering
    \begin{tikzpicture}[xscale=7]
      \foreach \x in {0, 0.1111, 0.3333, 0.7777, 1} {
        \draw (\x, 0.3) -- (\x, 0.7);
      }

      \foreach \x in {0, 0.2222, 0.3333, 0.7777, 1} {
        \draw (\x, -0.3) -- (\x, -0.7);
      }

      \draw[ultra thick, black!25!white] (0, 0.5) -- node[above] {$A$} (0.1111, 0.5);
      \draw[ultra thick, black!75!white] (0.1111, 0.5) -- node[above] {$B$} (0.3333, 0.5);
      \draw[ultra thick, black] (0.3333, 0.5) -- node[above] {$C$} (0.7777, 0.5);
      \draw[ultra thick, black!50!white] (0.7777, 0.5) -- node[above] {$D$} (1, 0.5);

      \draw[ultra thick, black] (0, -0.5) -- node[below] {$C$} (0.2222, -0.5);
      \draw[ultra thick, black!75!white] (0.2222, -0.5) -- node[below] {$B$} (0.3333, -0.5);
      \draw[ultra thick, black!50!white] (0.3333, -0.5) -- node[below] {$D$} (0.7777, -0.5);
      \draw[ultra thick, black!25!white] (0.7777, -0.5) -- node[below] {$A$} (1, -0.5);
    \end{tikzpicture}
    \caption{A reciprocal transformation.}
    \label{fig:composition}
  \end{figure}
  Observe that the intervals labeled $B$, $C$, and $D$ have image in
  $S$ and their measures each shrink by a factor of $1/2$.  The
  intervals labeled $A$ and $E$, however, have images in $\Phi(S)$ and
  their measures grow by a factor of $2$.

  Now consider the first return map to $S = [0, 1/3)$ which we will
  denoted $F_S$ and seen in Figure~\ref{fig:wandering_FS}.
  \begin{figure}[h!]
    \centering
    \begin{tikzpicture}[xscale=7]
      \foreach \x in {0, 0.1111, 0.3333} {
        \draw (\x, 0.3) -- (\x, 0.7);
      }

      \foreach \x in {0, 0.2222, 0.3333} {
        \draw (\x, -0.3) -- (\x, -0.7);
      }

      \draw[ultra thick, black!25!white] (0, 0.5) -- node[above] {$A$} (0.1111, 0.5);
      \draw[ultra thick, black!75!white] (0.1111, 0.5) -- node[above] {$B$} (0.3333, 0.5);

      \draw[ultra thick, black!25!white] (0, -0.5) -- node[below] {$A$} (0.2222, -0.5);
      \draw[ultra thick, black!75!white] (0.2222, -0.5) -- node[below] {$B$} (0.3333, -0.5);
    \end{tikzpicture}
    \caption{The first return map admits a wandering interval.}
    \label{fig:wandering_FS}
  \end{figure}
  We observe that the interval $B \setminus F_S(B)$ is wandering under
  $F_S$, always shrinking and shifting to the right.  We also observe
  that the interval $A$ grows by a factor of $2$ under $F_S$,
  corresponding to the fact $F_S(A) = F^3(A)$; the image of $A$ grows
  by a factor of $2$ under the first two applications of $F$ but then
  $F^2(A)$ shrinks by a factor of $1/2$ when $F$ is applied again.
\end{example}


Noting that the measure of the image of a set must grow by $\rho$ if
that image is not in $S$ is the key observation required for showing
almost every point must eventually enter $S$.

\begin{lemma}
  \label{lemma:enterS}
  Let $F = \Phi T$ be the reciprocal transformation given by composing
  a measure-preserving bijection $T$ with a scaling involution
  $\Phi$ of scaling ratio $\rho > 1$.  Then for $\mu$-almost every $x
  \in X$, there exists an $n > 0$ so that $F^nx \in S$.
\end{lemma}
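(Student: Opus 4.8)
The plan is to show that the exceptional set
\[
W \;=\; \{\,x \in X : F^n x \notin S \text{ for all } n \geq 1\,\}
\]
is $\mu$-null. Since $X = S \sqcup \Phi(S)$, a point lies in $W$ precisely when every forward image $Fx, F^2x, \dots$ lands in the larger set $\Phi(S)$; writing $W = \bigcap_{n \geq 1} F^{-n}(\Phi(S))$ exhibits $W$ as a countable intersection of measurable sets, hence measurable, and shows $F^n(W) \subseteq \Phi(S)$ for every $n \geq 1$.

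The engine of the argument is the measure-scaling behavior noted just before the lemma: I claim that whenever a measurable set $A$ satisfies $F(A) \subseteq \Phi(S)$, one has $\mu(F(A)) = \rho\,\mu(A)$. This I would verify directly from $F = \Phi T$. Because $\Phi$ is an involution, $F(A) = \Phi(T(A)) \subseteq \Phi(S)$ forces $T(A) = \Phi(F(A)) \subseteq \Phi(\Phi(S)) = S$, and then the defining property of the scaling involution on subsets of $S$ together with the measure-invariance of $T$ gives $\mu(F(A)) = \mu(\Phi(T(A))) = \rho\,\mu(T(A)) = \rho\,\mu(A)$.

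Now apply this identity with $A = F^n(W)$ for each $n \geq 0$. Since $F$ is a bijection, $F(F^n(W)) = F^{n+1}(W) \subseteq \Phi(S)$, so the scaling identity yields $\mu(F^{n+1}(W)) = \rho\,\mu(F^n(W))$, and by induction $\mu(F^n(W)) = \rho^n \mu(W)$ for all $n$. But $F^n(W) \subseteq X$ has measure at most $\mu(X) = 1$, while $\rho > 1$; letting $n \to \infty$ forces $\mu(W) = 0$. Therefore almost every $x$ lies outside $W$, which is exactly the statement that some forward iterate of $x$ enters $S$.

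The one point requiring care — and what I would flag as the \emph{main obstacle} — is ensuring the exponential bookkeeping is attached to the correct sets: the hypothesis $F(A) \subseteq \Phi(S)$ must hold at \emph{every} stage of the induction, which is precisely why $W$ is defined by ``$F^n x \notin S$ for all $n \geq 1$'' rather than merely for $n = 1$. If only the first image were known to avoid $S$, a subsequent image could re-enter $S$ and its measure would contract by $\rho^{-1}$ instead of expanding, breaking the estimate. Everything else is a routine induction; note in particular that the proof uses only that $\mu$ is a finite measure and $\rho > 1$, and makes no appeal to conservativity of $F$ — as it should not, since this lemma is itself a stepping stone toward such recurrence statements.
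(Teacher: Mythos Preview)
Your proof is correct and follows essentially the same approach as the paper's own argument: define the set of points whose forward orbit never enters $S$, observe that each application of $F$ to this set lands in $\Phi(S)$ and hence scales its measure by $\rho$, and conclude from finiteness of $\mu$ that the set is null. Your version is more careful in verifying the scaling identity $\mu(F(A)) = \rho\,\mu(A)$ from the defining properties of $\Phi$ and $T$, whereas the paper simply asserts it.
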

\begin{proof}
  Let $U$ be the set of points in $X$ which never enter $S$:
  \[
  U = \left\{x \in X \, \big| \, F^nx \notin S \text{ for all } n > 0\right\}.
  \]
  We claim $U$ must have measure zero.  If not, then all iterates of
  $U$ remain in $\Phi(S)$.  However, this means every measurable
  subset of $U$ has its size increased by a factor of $\rho$ with each
  application of $F$.  Since $\Phi(S)$ has finite measure, though,
  this is impossible unless $\mu(U) = 0$.
\end{proof}

We emphasize again that throughout the paper all maps are assumed to
be measurably bijective unless otherwise explicitly stated.  Some
results may hold even for transformations which are not bijective,
although some proofs given here would need to be modified as we often
consider the images instead of preimages in the proofs.

Lemma~\ref{lemma:enterS} implies, in particular, that there is a
well-defined first-return map from $S$ to itself which we denote $F_S
: S \to S$.  Notice that dynamical properties of $F$ are reflected in
$F_S$ and vice versa when $F_S$ is also a non-singular bijection.
Notice, though, that $F_S$ may fail to be surjective in general, and
in such situations is not non-singular.

\begin{example}
  \label{ex:nonsurjective}
  Consider the reciprocal transformation formed by composing the
  interval exchange transformation $T$ consisting of four intervals, $A
  = [0, 1/6)$, $B = [1/6, 1/3)$, $C = [1/3, 1/2)$, and $D = [1/2, 1)$
  which are permuted as shown in Figure~\ref{fig:nonsurjective_IET}.  
  Composing $T$ with the scaling involution of
  Example~\ref{ex:scaling_third}, resulting in the transformation
  shown in Figure~\ref{fig:nonsurjective_F}.

  \begin{figure}[h!]
    \centering
    \begin{tikzpicture}[xscale=7]
      \foreach \x in {0, 0.1667, 0.3333, 0.5, 1} {
        \draw (\x, 0.3) -- (\x, 0.7);
      }

      \foreach \x in {0, 0.1667, 0.3333, 0.8333, 1} {
        \draw (\x, -0.3) -- (\x, -0.7);
      }

      \draw[ultra thick, black!25!white] (0, 0.5) -- node[above] {$A$} (0.1667,
      0.5);
      \draw[ultra thick, black!75!white] (0.1667, 0.5) -- node[above] {$B$} (0.3333,
      0.5);
      \draw[ultra thick, black!50!white] (0.3333, 0.5) -- node[above] {$C$} (0.5,
      0.5);
      \draw[ultra thick, black] (0.5, 0.5) -- node[above] {$D$} (1, 0.5);

      \draw[ultra thick, black!50!white] (0, -0.5) -- node[below] {$C$} (0.1667,
      -0.5);
      \draw[ultra thick, black!25!white] (0.1667, -0.5) -- node[below] {$A$} (0.3333,
      -0.5);
      \draw[ultra thick, black] (0.3333, -0.5) -- node[below] {$D$} (0.8333, -0.5);
      \draw[ultra thick, black!75!white] (0.8333, -0.5) -- node[below] {$B$} (1, -0.5);
    \end{tikzpicture}
    \caption{An interval exchange $T$ defined on four subintervals of
      $[0, 1)$.}
    \label{fig:nonsurjective_IET}
  \end{figure}

  \begin{figure}[h!]
    \centering
    \begin{tikzpicture}[xscale=7]
      \foreach \x in {0, 0.1667, 0.3333, 0.5, 1} {
        \draw (\x, 0.3) -- (\x, 0.7);
      }

      \foreach \x in {0, 0.25, 0.3333, 0.6666, 1} {
        \draw (\x, -0.3) -- (\x, -0.7);
      }

      \draw[ultra thick, black!25!white] (0, 0.5) -- node[above] {$A$} (0.1667,
      0.5);
      \draw[ultra thick, black!75!white] (0.1667, 0.5) -- node[above] {$B$} (0.3333,
      0.5);
      \draw[ultra thick, black!50!white] (0.3333, 0.5) -- node[above] {$C$} (0.5,
      0.5);
      \draw[ultra thick, black] (0.5, 0.5) -- node[above] {$D$} (1, 0.5);

      \draw[ultra thick, black] (0, -0.5) -- node[below] {$D$} (0.25,
      -0.5);
      \draw[ultra thick, black!75!white] (0.25, -0.5) -- node[below] {$B$} (0.3333,
      -0.5);
      \draw[ultra thick, black!50!white] (0.3333, -0.5) -- node[below] {$C$} (0.6666, -0.5);
      \draw[ultra thick, black!25!white] (0.6666, -0.5) -- node[below] {$A$} (1, -0.5);
    \end{tikzpicture}
    \caption{The reciprocal transformation $F$ obtained by composing
      the map $T$ from Figure~\ref{fig:nonsurjective_IET} with the
      involution $\Phi$ exchanging $[0, 1/3)$ and $[1/3, 1)$.}
    \label{fig:nonsurjective_F}
  \end{figure}

  The first return map to $S = [0, 1/3)$ is then easily seen to be the
  map in Figure~\ref{fig:nonsurjective_FS}.  The dashed interval in
  Figure~\ref{fig:nonsurjective_FS} indicating that $[0, 1/12)$ is
  not in the image of $F_S$ and so $F_S^{-1}([0, 1/12)) =
  \emptyset$ and thus $F_S$ in this example is not non-singular.

  \begin{figure}[h!]
    \centering
    \begin{tikzpicture}[xscale=7]
      \foreach \x in {0, 0.1667, 0.3333} {
        \draw (\x, 0.3) -- (\x, 0.7);
      }
      \foreach \x in {0, 0.0833, 0.25, 0.333} {
        \draw (\x, -0.3) -- (\x, -0.7);
      }
      \draw[ultra thick, black!25!white] (0, 0.5) -- node[above] {$A$} (0.1667,
      0.5);
      \draw[ultra thick, black!75!white] (0.1667, 0.5) -- node[above] {$B$}
      (0.3333, 0.5);
      \draw[ultra thick, black!25!white] (0.08333, -0.5) -- node[below] {$A$}
      (0.25, -0.5);
      \draw[ultra thick, black!75!white] (0.25, -0.5) -- node[below] {$B$}
      (0.3333, -0.5);
      \draw[dashed] (0, -0.5) -- (0.0833, -0.5);
    \end{tikzpicture}
    \caption{The first return map $F_S$ of the map $F$ from
      Figure~\ref{fig:nonsurjective_F} to $S = [0, 1/3)$.  The dashed
        segment
      is not in the image of $F_S$.}
    \label{fig:nonsurjective_FS}
  \end{figure}
\end{example}

\begin{lemma}
  \label{lemma:FS_surjective_nonsingular}
  If the first-return map $F_S$ is surjective, then it is non-singular.
\end{lemma}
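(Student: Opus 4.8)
The plan is to show that if $F_S$ is surjective, then $F_S$ preserves null sets in both directions, which is the definition of non-singularity. Since $F$ is a non-singular bijection and, by Lemma~\ref{lemma:enterS}, the first-return time $n(x) = \min\{n > 0 : F^n x \in S\}$ is finite for $\mu$-almost every $x \in S$, we can decompose $S$ (up to a null set) into the level sets $S_n = \{x \in S : n(x) = n\}$ and write $F_S|_{S_n} = F^n|_{S_n}$. Each $F^n$ is a non-singular bijection of $X$ as a composition of non-singular bijections, so the restriction of $F_S$ to each $S_n$ carries null sets to null sets; since a countable union of null sets is null, $F_S$ maps null subsets of $S$ to null subsets of $S$. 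This gives one direction, $\mu(E) = 0 \implies \mu(F_S(E)) = 0$, with no use of surjectivity.

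For the reverse direction I would argue by contradiction using surjectivity together with a pigeonhole/measure-exhaustion argument. Suppose $E \subseteq S$ has $\mu(E) > 0$ but $\mu(F_S(E)) = 0$; equivalently, after refining, there is a positive-measure set $E' \subseteq S$ on which $F_S$ is not injective in the measure-theoretic sense, or whose image is null. Since $F_S$ is surjective, $S = \bigcup_n F_S(S_n)$ up to null sets, and each $F_S(S_n) = F^n(S_n)$. The key point is that the ``return cocycle'' records how measure is scaled: if a point returns to $S$ after passing through $\Phi(S)$ a net of $k$ times and through $S$ a net of $\ell$ times among its first $n$ iterates, then a subset of $S_n$ of measure $m$ is mapped by $F_S$ to a set of measure $\rho^{\,c} m$ for the corresponding integer exponent $c$. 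I would organize $S$ according to this scaling exponent, and use that on each piece $F_S$ multiplies measure by a fixed power of $\rho$, hence maps positive-measure sets to positive-measure sets; combined with the first (null-preserving) direction and surjectivity, this forces $F_S$ to be, up to null sets, a bijection that is non-singular both ways.

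More carefully, the cleanest route avoids tracking the cocycle explicitly: once we know $F_S$ is a countable union of restrictions of the non-singular bijections $F^n$ to the pieces $S_n$, the only way non-singularity can fail is that $F_S^{-1}$ of some positive-measure set is null, i.e. some positive-measure set $B \subseteq S$ lies outside the image — but this is exactly ruled out by surjectivity, once we check that ``surjective'' here should be read as ``surjective up to null sets'' and that the forward images $F^n(S_n)$ are measurable with $F^n$ restricted to $S_n$ a non-singular bijection onto its image. So the argument reduces to: (i) $F_S$ is a.e. defined (Lemma~\ref{lemma:enterS}); (ii) $F_S$ is a.e. injective, because the $S_n$ partition $S$ and $F^n$ is injective, while distinct return-time pieces have disjoint images by definition of first return; (iii) $F_S$ preserves null sets forward, from composition of non-singular maps; (iv) surjectivity plus (iii) gives that $F_S$ preserves null sets backward.

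The main obstacle I expect is step~(ii) and the bookkeeping around ``up to a null set'': one must verify that the images $F^n(S_n)$ are pairwise disjoint — which holds because if $F^n x = F^m y \in S$ with $x,y \in S$ and $n < m$ were both first-return times, then $F^{m-n}(F^n x) = F^m y \notin S$ on one hand but $F^{m-n} y$ with $y$'s return time $m$ forces $F^j y \notin S$ for $0 < j < m$, a contradiction when we track which iterate lands back in $S$. Making this disjointness precise (and confirming that the a.e.-defined first-return map is genuinely a.e.-injective, not merely piecewise-injective) is the one place where care is needed; the non-singularity bookkeeping itself is then routine.
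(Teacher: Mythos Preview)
Your proposal is correct and follows essentially the same architecture as the paper: first show $F_S$ is injective via the standard first-return-time argument (if $F^{n}(s)=F^{n'}(s')$ with $n>n'$ then $F^{\,n-n'}(s)=s'\in S$ contradicts minimality of $n$), then use surjectivity to get bijectivity, and finally deduce non-singularity from the decomposition $S=\bigsqcup_n S_n$ with $F_S|_{S_n}=F^n|_{S_n}$. The only substantive difference is in that last step: the paper computes the Radon--Nikodym derivative explicitly as $\rho^{\,n-2}$ on $S_n$ (since the orbit visits $\Phi(S)$ exactly $n-1$ times and $S$ once before returning), whereas you invoke only the abstract non-singularity of $F^n$; both arguments are valid, and yours is slightly more general while the paper's yields the derivative that is used later. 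One small cleanup: your phrasing in step~(iv), that failure of backward non-singularity ``i.e.\ some positive-measure set $B\subseteq S$ lies outside the image,'' conflates two different things---what you really need is that each $F^n|_{S_n}$ is a non-singular bijection onto its image, so that once surjectivity gives $S=\bigsqcup_n F^n(S_n)$ up to null sets, any positive-measure $B$ meets some $F^n(S_n)$ in positive measure and hence $F_S^{-1}(B)\supseteq (F^n)^{-1}(B\cap F^n(S_n))$ has positive measure.
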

\begin{proof}
  First note that as $F$ is a bijection, the first-return map $F_S$ is
  always injective: suppose $s, s' \in S$ with $F_S(s) = F_S(s')$
  where $F_S(s) = F^n(s)$ and $F_S(s') = F^{n'}(s')$ where $n, n'$ are
  the smallest positive integers so that $F^{n}(s), F^{n'}(s') \in S$.
  If $n = n'$ then since $F$ is injective, we have $s = s'$.
  Otherwise suppose without loss of generality that $n > n'$.
  Applying $F^{-n'}$ to both sides of the equation $F^n(s) =
  F^{n'}(s')$ yields $F^{n - n'}(s) = s'$, but as $n - n' < n$ this
  contradicts the definition of $n$.

  If $F_S$ is surjective, it is thus bijective.  Hence $F$ is
  non-singular if and only if for each $E \subseteq S$ of positive
  measure we have that $F(E)$ has positive measure, and this is clear
  from the decomposition $S = \bigsqcup_{n \in \mathbb{N}} S_n$ with
  $\frac{dF_{S*}}{d\mu}$ equaling $\rho^{n-2}$ on $S_n$.
\end{proof}

\begin{proposition}
  \label{prop:F_conserverative_iff_FS}
  The reciprocal transformation $F$ is conservative if and only if the
  first-return map $F_S$ is conservative.
\end{proposition}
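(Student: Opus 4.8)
The plan is to prove both directions by relating wandering sets of $F$ to wandering sets of $F_S$, using the fact (from Lemma~\ref{lemma:enterS}) that almost every point visits $S$ infinitely often under a conservative map, and visits $S$ at least once in any case.

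\medskip

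\emph{Proof sketch.}
First suppose $F$ is conservative; we show $F_S$ is conservative. Suppose toward a contradiction that $W \subseteq S$ is a wandering set for $F_S$ of positive measure, so the sets $F_S^n(W)$, $n \ge 0$, are pairwise disjoint. Since each $F_S^n(W)$ is (up to a null set) of the form $F^{k}(\text{subset of }W)$ for various return-time values $k$, one packages $W$ together with the finitely-or-countably many ``excursion'' pieces $F^j(\{x \in W : \text{return time} > j\})$ for $j \ge 1$ into a set $\widehat W \subseteq X$ that is wandering for $F$. The point is that the full forward $F$-orbit of $W$ decomposes into the $F_S$-orbit pieces (which lie in $S$ and are disjoint by assumption) together with their intermediate excursions outside $S$ (which are disjoint from one another and from the $S$-pieces by the minimality in the definition of the return time and the injectivity of $F$). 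Choosing the ``first half'' of this orbit — say $W$ itself, or more carefully a piece of $W$ together with one representative from each excursion level — yields a positive-measure wandering set for $F$, contradicting conservativity of $F$.

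\medskip

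Conversely, suppose $F_S$ is conservative; we show $F$ is conservative. Let $W \subseteq X$ be wandering for $F$ with $\mu(W) > 0$; we derive a contradiction. By Lemma~\ref{lemma:enterS}, for a.e.\ $x \in W$ there is a least $n(x) > 0$ with $F^{n(x)}(x) \in S$ (and also, by applying the lemma to iterates, a.e.\ point of $X$ — hence a.e.\ point of $W$ — returns to $S$; but what we need is just that the forward orbit of $W$ meets $S$). Partitioning $W$ according to the value of $n(x)$, some piece $W_k = \{x \in W : n(x) = k\}$ has positive measure, and then $V := F^k(W_k) \subseteq S$ has positive measure. Now $V$ is wandering for $F_S$: if $F_S^m(v) = F_S^{m'}(v')$ for $v, v' \in V$ with $m < m'$, then since $F_S$-iterates are $F$-iterates, this gives an equality $F^a(w) = F^b(w')$ with $w, w' \in W_k$ and $a = k, b = k + (\text{positive})$ wait — more precisely $F_S^m(v) = F^{k + s}(x)$ and $F_S^{m'}(v') = F^{k + s'}(x')$ for the originating points $x, x' \in W_k$ and return-time sums $s < s'$ (strict since $m < m'$ and return times are positive), so $F^{k+s}(x) = F^{k+s'}(x')$ forces, by injectivity of $F$, that the $F$-orbits of $x$ and $x'$ collide at distinct offsets, contradicting that $W$ is wandering. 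Hence $V$ is a positive-measure wandering set for $F_S$, contradicting conservativity of $F_S$.

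\medskip

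I expect the main obstacle to be the bookkeeping in the first direction: turning an $F_S$-wandering set into an honest $F$-wandering set requires care because the $F$-orbit of $W$ contains both the ``sampled'' points in $S$ and the intervening excursions, and one must verify \emph{all} of these are mutually disjoint (not just the $S$-samples), which follows from injectivity of $F$ combined with the minimality defining the return time, but is fiddly to write cleanly. A tidy way to organize it: show directly that $\mathfrak{C}(F) \cap S = \mathfrak{C}(F_S)$ up to null sets — the inclusion $\mathfrak{C}(F_S) \supseteq \mathfrak{C}(F) \cap S$ is the contrapositive of ``$V$ wandering for $F_S$ $\Rightarrow$ a piece of an $F$-orbit through $V$ is wandering for $F$,'' and the reverse inclusion is immediate since an $F_S$-orbit is a subsequence of an $F$-orbit — and then conclude using that, by Lemma~\ref{lemma:enterS}, $\mathfrak{C}(F)$ has full measure iff $\mathfrak{C}(F) \cap S$ has full measure in $S$ (every point of $\mathfrak{D}(F)$ enters $S$, landing in a wandering set for $F$ whose trace on $S$ witnesses dissipativity of $F_S$). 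This reduction to an identity of Hopf-decomposition pieces avoids constructing explicit wandering sets twice.
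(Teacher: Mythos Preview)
Your approach matches the paper's: both directions go by contrapositive, transferring wandering sets between $F$ and $F_S$, with Lemma~\ref{lemma:enterS} supplying the first-entry-time partition in the direction ``$F$-wandering $\Rightarrow$ $F_S$-wandering.'' Two remarks.

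The direction you flag as the hard one is in fact the easy one, and the paper handles it with no tower construction: an $F_S$-wandering $W\subseteq S$ is already $F$-wandering. Indeed, if $w\in W$ and $F^n(w)\in W\subseteq S$ for some $n>0$, then since the $F$-orbit of $w$ begins in $S$ and is back in $S$ at time $n$, there is some $j\ge 1$ with $F_S^{\,j}(w)=F^n(w)\in W$, contradicting $F_S$-wandering. Your parenthetical ``say $W$ itself'' is exactly the right choice; the excursion bookkeeping you worry about is unnecessary.

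In the other direction, the assertion ``$s<s'$ strict since $m<m'$ and return times are positive'' is not justified: $s$ and $s'$ are return-time sums along \emph{different} orbits (those of $v$ and $v'$), so $m<m'$ gives no comparison. The repair is immediate. From $F^{k+s}(x)=F^{k+s'}(x')$ and injectivity of $F$ one gets $F^{s}(x)=F^{s'}(x')$; if $s\ne s'$ this already contradicts $W$ being $F$-wandering, while if $s=s'$ then $x=x'$, hence $v=v'$, and $F_S^{\,m}(v)=F_S^{\,m'}(v)$ with $m\ne m'$ contradicts injectivity of $F_S$ (shown in Lemma~\ref{lemma:FS_surjective_nonsingular}). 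Equivalently, just verify $V\cap F_S^{\,n}(V)=\emptyset$ for $n>0$ directly, which sidesteps the issue.
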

\begin{proof}
  Suppose $W$ was an $F$-wandering set.  By Lemma~\ref{lemma:enterS},
  almost every point of $W$ eventually enters $S$.  Let $W_n$ be the
  subset of $W$ that first enters $S$ after $n$ applications of $F$,
  for $n \geq 0$.  If $W$ has positive measure, then at least one
  $W_n$ does as well, and $F^n(W_n)$ is an $F_S$-wandering set of
  positive measure.  So, non-conservativity of $F$ implies
  non-conservativity of $F_S$.

  Now suppose $W \subseteq S$ is an $F_S$-wandering set.  Then $W$
  must also be an $F$-wandering set, for if $F^m(W) \cap F^n(W) \neq
  \emptyset$, then by assumption $F^m(W) \cap F^n(W) \subseteq
  \Phi(S)$.  There exists some subset $U$ of $F^m(W) \cap F^n(W)$ such
  that $F^k(U) \subseteq S$ for some $k > 0$ and $F^j(U) \cap S =
  \emptyset$ for all $0 < j < k$.  But then $U' = F^{-(n+k)}(U)$ and
  $U'' = F^{-(m+k)}(U)$ are subsets of $W \subseteq S$ with the property
  that $F^{n+k}(U') \cap F^{m+k}(U'')$ is a non-empty subset of $S$,
  and there exists $n'$ and $m'$ so that $F_S^{n'}(U') \cap
  F_S^{m'}(U'')$ is non-empty, contradicting that $W$ was wandering.
  Thus $F_S$-wandering sets are also $F$-wandering sets.  Since $F$
  and $F_S$ are non-singular, non-conservativity of $F_S$ implies
  non-conservativity of $F$.
\end{proof}

To study
the dynamics of $F_S$, it is helpful to notice that $S$ naturally
partitions into a countable collection of subsets.  For each $n \geq
1$, we let $S_n$ denote the subset of $S$ which first returns to $S$
after $n$ applications of $F$:
\[
S_n := \left\{ x \in S \, \big| \, F_S(x) = F^n(x) \right\}.
\]
As $F = \Phi T$ where $\Phi$ scales $S$ by $\rho$ and $\Phi(S)$
by $\rho^{-1}$, notice that $\mu(F_S(S_n)) = \rho^{n-2} \mu(S_n)$.
That is, $F_S$ shrinks $S_1$ by $\rho^{-1}$, preserves the measure of
$S_2$, and enlarges all other $S_n$.  As a consequence, if $W$ was an
$F_S$-wandering set, it must eventually enter and remain in $S_1$
under iteration of $F_S$.  This simple observation establishes the
following lemma.

\begin{lemma}
  \label{lemma:conservativeFS}
  If $\mu(S_1) = 0$, then $F_S$ and $F$ are conservative.
\end{lemma}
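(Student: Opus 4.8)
The plan is to show directly that $F_S$ admits no wandering set of positive measure, and then to obtain conservativity of $F$ from Proposition~\ref{prop:F_conserverative_iff_FS}. The crucial input is the way $F_S$ distorts measure on the pieces $S_n$: since $F_S$ coincides with $F^n$ on $S_n$, and along such a segment of orbit the scaling involution $\Phi$ contributes $n-1$ factors of $\rho$ (at the steps whose image lies in $\Phi(S)$) and a single factor of $\rho^{-1}$ (at the last step, whose image lies in $S$), one has
\[
\mu(F_S(E)) \;=\; \sum_{n \ge 1} \rho^{\,n-2}\,\mu(E \cap S_n)
\]
for every measurable $E \subseteq S$ --- this is precisely the computation already recorded before the statement of the lemma. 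If $\mu(S_1) = 0$ then the $n = 1$ term vanishes, and every remaining coefficient satisfies $\rho^{\,n-2} \ge 1$; hence we obtain the key monotonicity estimate $\mu(F_S(E)) \ge \mu(E)$ for every measurable $E \subseteq S$.

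Now suppose, for contradiction, that $W \subseteq S$ is $F_S$-wandering with $\mu(W) > 0$. Applying the estimate repeatedly, the sequence $\mu(W), \mu(F_S(W)), \mu(F_S^2(W)), \dots$ is non-decreasing, so $\mu(F_S^k(W)) \ge \mu(W) > 0$ for all $k \ge 0$. On the other hand $F_S$ is injective --- this was observed in the proof of Lemma~\ref{lemma:FS_surjective_nonsingular} --- so from $W \cap F_S^j(W) = \emptyset$ for all $j \ge 1$ we get, by applying the injective map $F_S^i$, that $F_S^i(W) \cap F_S^j(W) = \emptyset$ whenever $0 \le i < j$; that is, the sets $\{F_S^k(W)\}_{k \ge 0}$ are pairwise disjoint. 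Since they all lie in $S$ and $\mu(S) \le \mu(X) < \infty$, we have $\sum_{k \ge 0}\mu(F_S^k(W)) \le \mu(S) < \infty$, which forces $\mu(F_S^k(W)) \to 0$, contradicting $\mu(F_S^k(W)) \ge \mu(W) > 0$. Hence no such $W$ exists, i.e.\ $F_S$ is conservative.

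Conservativity of $F$ then follows from Proposition~\ref{prop:F_conserverative_iff_FS}. If one wishes to sidestep any worry that $F_S$ need not be non-singular here (compare Lemma~\ref{lemma:FS_surjective_nonsingular} and Example~\ref{ex:nonsurjective}), note that only the easy half of that proposition is actually used: were $F$ non-conservative it would carry an $F$-wandering set of positive measure, and the argument in the first paragraph of that proposition's proof --- which does not invoke non-singularity of $F_S$ --- extracts from it an $F_S$-wandering set of positive measure, contradicting the previous paragraph.

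The mathematics is short; the only place requiring a little care is purely bookkeeping, namely checking that the iterates $F_S^k$ are defined $\mu$-almost everywhere on $S$, so that the sets $F_S^k(W)$ are meaningful up to null sets. This follows by iterating Lemma~\ref{lemma:enterS} together with the fact that $F$ (hence each $F^{-n}$) is non-singular, so that at each stage the set of points of $S$ from which some later iterate fails to re-enter $S$ is null. I do not anticipate a genuine obstacle beyond this point.
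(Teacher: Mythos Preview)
Your proposal is correct and follows essentially the same approach as the paper. The paper's ``proof'' is the sentence preceding the lemma: since $\mu(F_S(S_n)) = \rho^{n-2}\mu(S_n)$, an $F_S$-wandering set must eventually enter and remain in $S_1$, whence $\mu(S_1)=0$ forces conservativity; your version unpacks this into the monotonicity estimate $\mu(F_S(E)) \ge \mu(E)$ and the disjoint-iterates contradiction, and is more careful than the paper about injectivity of $F_S$, the a.e.\ well-definedness of $F_S^k$, and the fact that only the direction of Proposition~\ref{prop:F_conserverative_iff_FS} not relying on non-singularity of $F_S$ is needed.
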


As we will see, while Lemma~\ref{lemma:conservativeFS} easily
guarantees that a transformation is conservative, it will be too
strong for the applications we have in mind in
Section~\ref{sec:DiscreteMaharam}; see Example~\ref{ex:Ftilde_not_ergodic}.

\subsection{Ergodicity}
We now turn our attention to the ergodicity of reciprocal
transformations.  
First notice that the construction of $F = \Phi T$ as the composition
of a scaling involution and measure-preserving transformation
places strong restrictions on invariant subsets.  As
Proposition~\ref{prop:FinvProportion} shows, the proportion of an
$F$-invariant set which lives in $S$ (respectively, $\Phi(S)$) must
equal the measure of $S$ (resp., $\Phi(S)$).

\begin{proposition}
  \label{prop:FinvProportion}
  If $E$ is an $F$-invariant set, then
  \[
  \frac{\mu(E \cap S)}{\mu(E)} = \mu(S)
  \text{ and }
  \frac{\mu(E \cap \Phi(S))}{\mu(E)} = \mu(\Phi(S))
  \]
\end{proposition}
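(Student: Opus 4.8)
The plan is to exploit the fact that $\Phi$ is an involution exchanging $S$ and $\Phi(S)$ with a uniform scaling, combined with the measure-preservation of $T$, to pin down how $F$ moves mass between the two halves of $X$. Write $E$ for the $F$-invariant set. The first observation I would record is that, because $T$ preserves $\mu$ and $\Phi$ is a bijection, $F = \Phi T$ is a nonsingular bijection, so $F(E) = E$ (up to null sets) means $E = F^{-1}(E) = T^{-1}\Phi(E)$, hence $\mu(E) = \mu(\Phi(E))$; equivalently $\mu(T^{-1}\Phi(E)) = \mu(\Phi(E))$ by measure-preservation of $T$. So the key relation is simply $\mu(\Phi(E)) = \mu(E)$.

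Next I would decompose $E = (E\cap S)\sqcup (E\cap\Phi(S))$ and apply $\Phi$. Since $\Phi$ swaps $S$ and $\Phi(S)$, we get $\Phi(E) = \Phi(E\cap S)\sqcup\Phi(E\cap\Phi(S))$ where $\Phi(E\cap S)\subseteq\Phi(S)$ and $\Phi(E\cap\Phi(S))\subseteq S$. Using the scaling property of $\Phi$ in both directions (the third bullet of the definition, and the consequence noted immediately after it), $\mu(\Phi(E\cap S)) = \rho\,\mu(E\cap S)$ and $\mu(\Phi(E\cap\Phi(S))) = \rho^{-1}\mu(E\cap\Phi(S))$. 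Therefore
\[
\mu(\Phi(E)) = \rho\,\mu(E\cap S) + \rho^{-1}\mu(E\cap\Phi(S)).
\]
Setting this equal to $\mu(E) = \mu(E\cap S) + \mu(E\cap\Phi(S))$ and writing $a = \mu(E\cap S)$, $b = \mu(E\cap\Phi(S))$, I get $(\rho-1)a = (1 - \rho^{-1})b = \rho^{-1}(\rho-1)b$, so $a = \rho^{-1}b$ since $\rho > 1$. Because $\mu(S) = \mu(S\cap X)$ and $\mu(\Phi(S)) = \rho\,\mu(S)$ together with $\mu(S)+\mu(\Phi(S)) = 1$ give $\mu(S) = 1/(1+\rho)$ and $\mu(\Phi(S)) = \rho/(1+\rho)$, the ratio $a/(a+b) = \rho^{-1}b/((\rho^{-1}+1)b) = 1/(1+\rho) = \mu(S)$, and likewise $b/(a+b) = \rho/(1+\rho) = \mu(\Phi(S))$, which is exactly the claim (the case $\mu(E) = 0$ being vacuous, and $\mu(E) > 0$ forcing $a + b > 0$).

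The only genuinely delicate point is the very first step: justifying that $F$-invariance of $E$ up to null sets yields $\mu(\Phi(E)) = \mu(E)$ cleanly. I would phrase invariance as $\mu(E\,\triangle\,F^{-1}E) = 0$, note that $F^{-1} = T^{-1}\Phi$ (as a nonsingular bijection mod null sets), and use that both $T^{-1}$ and $\Phi$ send null sets to null sets, so $\mu(E\,\triangle\,T^{-1}\Phi(E)) = 0$; applying $\mu$ and measure-preservation of $T$ then gives $\mu(E) = \mu(T^{-1}\Phi(E)) = \mu(\Phi(E))$. Everything after that is the elementary linear-algebra computation above, so I expect no real obstacle beyond bookkeeping the mod-null-set caveats carefully.
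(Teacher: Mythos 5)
Your proposal is correct and follows essentially the same route as the paper's proof: both derive $\mu(\Phi(E)) = \mu(E)$ from $F$-invariance together with the measure-preservation of $T$, then decompose $E$ over $S$ and $\Phi(S)$, apply the scaling property of $\Phi$ in both directions, and solve the resulting linear equation to identify the proportion as $\frac{1}{1+\rho} = \mu(S)$. The only differences are cosmetic (you work with $a, b$ and $F^{-1}$ where the paper uses $r, 1-r$ and forward images), plus your slightly more careful handling of the mod-null-set and $\mu(E)=0$ caveats.
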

\begin{proof}
  Let $r$ denote the proportion of $E$ which lives in $S$, $r =
  \frac{\mu(E \cap S)}{\mu(E)}$, and so $1 - r$ is the proportion of
  $E$ which lives in $\Phi(S)$.  Now consider how $\Phi$ distorts the
  measure of $E$:
  \begin{align*}
    \mu(\Phi(E))
    &= \mu(\Phi(\left[E \cap S\right] \cup \left[E \cap
      \Phi(S)\right])) \\
    &= \mu(\Phi(E \cap S)) + \mu(\Phi(E \cap \Phi(S))) \\
    &= \rho \mu(E \cap S) + \rho^{-1}\mu(E \cap \Phi(S)) \\
    &= \rho r \mu(E) + \rho^{-1}(1 - r)\mu(E).
  \end{align*}
  
  Notice that as $E$ is $F$-invariant, so $E = F(E) = \Phi T(E)$,
  and $\Phi$ is an involution we must have $\Phi(E) = T(E)$.  As
  $T$ is measure-preserving, however, this means $\mu(\Phi(E)) =
  \mu(T(E)) = \mu(E)$.  That is, the above equalities become simply
  \[
  \mu(E) = \rho r \mu(E) + \rho^{-1}(1 - r) \mu(E).
  \]
  Dividing by $\mu(E)$ we have $1 = \rho r + \frac{1 - r}{\rho}$ or
  $\rho = \rho^2 r + 1 - r = r(\rho^2 - 1) + 1$.  
  Solving for $r$ yields $r = \frac{\rho - 1}{\rho^{2} - 1}$ or simply
  $r = \frac{1}{\rho + 1}$.  Keeping in mind $\rho = \frac{\mu(\Phi
    S)}{\mu(S)}$ this becomes
  \[
  r = \frac{1}{\frac{\mu(\Phi(S))}{\mu(S)} + 1} =
  \frac{1}{\left(\frac{\mu(\Phi(S)) + \mu(S)}{\mu(S)}\right)} =
  \frac{\mu(S)}{\mu(\Phi(S)) + \mu(S)} = \mu(S).
  \]
  That the proportion of $E$ in $\Phi(S)$ equals $\mu(\Phi(S))$ now easily follows.
\end{proof}

We note that the expression of a reciprocal transformation as a
composition $F = \Phi T$ with $\Phi$ a scaling involution and $T$ a
probability-preserving transformation allows us the following condition
for the ergodicity of $F$.

\begin{proposition}
  \label{prop:EcontainsSergodic}
  The reciprocal transformation $F = \Phi T$ is ergodic if and only if for every
  set $E$ which does not contain $S$ (up to a null set), $\Phi(E) \neq T(E)$.
\end{proposition}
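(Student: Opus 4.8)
The plan is to unwind the definition of ergodicity and use the relation $F(E) = \Phi T(E)$ together with the fact that $\Phi$ is an involution. Recall that $E$ is $F$-invariant precisely when $F(E) = E$ (mod null sets), i.e. $\Phi T(E) = E$, and applying $\Phi$ to both sides, this is equivalent to $T(E) = \Phi(E)$. So the set of $F$-invariant sets is exactly the set of $E$ with $T(E) = \Phi(E)$. Ergodicity of $F$ means every such $E$ has measure $0$ or $1$.

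First I would handle the easy direction. Suppose $F$ is ergodic and let $E$ be any set with $T(E) = \Phi(E)$; then $E$ is $F$-invariant, so $\mu(E) \in \{0,1\}$. If $\mu(E) = 0$ then $E$ certainly does not contain $S$ (which has positive measure), so the contrapositive form of the claimed condition is satisfied; if $\mu(E) = 1$, then $E$ does contain $S$ up to a null set. Hence any $E$ with $\Phi(E) = T(E)$ that does \emph{not} contain $S$ must be null, which is the stated condition. (One should phrase this carefully: "does not contain $S$ up to a null set" should be read as "it is not the case that $S \subseteq E$ mod null," and a full-measure set does contain $S$ mod null, so the only invariant sets failing to contain $S$ are the null ones.)

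For the converse, suppose that every $E$ with $\Phi(E) \neq T(E)$ fails the hypothesis — wait, rather: suppose the stated condition holds, i.e. every set not containing $S$ has $\Phi(E) \neq T(E)$; equivalently, every set $E$ with $\Phi(E) = T(E)$ does contain $S$ mod null. Let $E$ be $F$-invariant; we must show $\mu(E) \in \{0,1\}$. Since $E$ is invariant, $\Phi(E) = T(E)$, so $S \subseteq E$ mod null. But the complement $E^c$ is also $F$-invariant (the complement of an invariant set is invariant), so the same argument gives $S \subseteq E^c$ mod null. Since $\mu(S) > 0$, $E$ and $E^c$ cannot both contain $S$ mod null unless one of them is null; therefore $\mu(E) \in \{0,1\}$, and $F$ is ergodic.

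The step that needs the most care — the main (modest) obstacle — is the bookkeeping around "up to a null set," specifically the claim that $S \subseteq E$ mod null and $S \subseteq E^c$ mod null together force $\mu(E \cap E^c) \geq \mu(S) > 0$, which is absurd since $E \cap E^c = \emptyset$. Concretely, if $\mu(S \setminus E) = 0$ and $\mu(S \setminus E^c) = 0$, then $\mu(S \cap E) = \mu(S)$ and $\mu(S \cap E^c) = \mu(S)$, but $\mu(S) = \mu(S\cap E) + \mu(S \cap E^c) = 2\mu(S)$, forcing $\mu(S) = 0$, a contradiction. So one of $S \setminus E$, $S \setminus E^c$ has been mis-stated — rather, the correct dichotomy is that we cannot have \emph{both} $S \subseteq E$ and $S \subseteq E^c$ mod null, hence at least one of $E$, $E^c$ does not contain $S$ mod null, and by hypothesis that one satisfies $\Phi(\cdot) \neq T(\cdot)$ unless it is null; but it \emph{is} invariant so $\Phi(\cdot) = T(\cdot)$, forcing it to be null. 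That pins down $\mu(E) \in \{0,1\}$. The rest is just careful translation between the "$F$-invariant," "$\Phi(E)=T(E)$," and "$E \supseteq S$ mod null" formulations, with the involution identity $\Phi F = T = G\Phi$ doing all the real work.
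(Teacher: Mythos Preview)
Your proof is correct, and the forward direction matches the paper's. For the converse, however, you take a genuinely different route. The paper argues as follows: if every set not containing $S$ fails to be $F$-invariant, then any invariant $U$ (of positive measure) must contain $S$, so $U \cap S = S$; then it invokes Proposition~\ref{prop:FinvProportion}, which gives $\mu(U \cap S)/\mu(U) = \mu(S)$, and hence $\mu(S)/\mu(U) = \mu(S)$, forcing $\mu(U) = 1$. Your argument instead exploits the symmetry that $E^c$ is also $F$-invariant: both $E$ and $E^c$ would have to contain $S$ mod null, which is impossible since $\mu(S) > 0$, so one of them is null. Your approach is more elementary and entirely self-contained, avoiding the proportion computation; the paper's approach, on the other hand, illustrates a concrete use of the structural result in Proposition~\ref{prop:FinvProportion} and gives $\mu(U)=1$ directly without passing through the complement. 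Both handle the null-set edge case in the same (slightly implicit) way.
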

\begin{proof}
  Suppose that $F$ is ergodic and $E$ does not contain all of $S$.
  Then $E$ can not have full measure and is not an invariant set.
  That is, $F(E) \neq E$ which is equivalent to $\Phi T(E) \neq E$,
  and as
  $\Phi$ is an involution this is equivalent to $T(E) \neq \Phi(E)$.

  Suppose now that for every $E$ not containing $S$, $E$ is not
  $F$-invariant.  Thus if a set $U$ were to be $F$-invariant, it must
  contain $S$.  In that case, $U \cap S = S$ and so the conclusion of
  Proposition~\ref{prop:FinvProportion} becomes
  \[
  \frac{\mu(U \cap S)}{\mu(U)} = \frac{\mu(S)}{\mu(U)} = \mu(S)
  \implies \mu(U) = 1.
  \]
\end{proof}

We can now show that determining whether or not $F$ is ergodic can be
rephrased in terms of the ergodicity of $F_S$.  

\begin{theorem}
  \label{thm:F_iff_FS_ergodic}
  The transformation $F = \Phi T$ is ergodic if the first-return map
  to $S$, $F_S$, is ergodic.
\end{theorem}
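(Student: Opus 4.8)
The plan is to argue the contrapositive: if $F$ is not ergodic, then $F_S$ is not ergodic. So suppose $E$ is an $F$-invariant set with $0 < \mu(E) < 1$. The natural candidate for an $F_S$-invariant set is $E_S := E \cap S$. First I would check that $E_S$ is genuinely $F_S$-invariant: since $F(E) = E$ and $S = \bigsqcup_{n \geq 1} S_n$ with $F_S|_{S_n} = F^n|_{S_n}$, a point $x \in E \cap S$ has $F_S(x) = F^n(x) \in E$ (as $E$ is $F$-invariant) and $F^n(x) \in S$ by definition of $S_n$, so $F_S(x) \in E \cap S$. Running the same argument with $F^{-1}$ (using that the first-return map is a bijection onto its image, and invoking Lemma~\ref{lemma:enterS} to ensure return times are finite a.e.) gives $F_S^{-1}(E_S) = E_S$ up to a null set, so $E_S$ is $F_S$-invariant.

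Next I must rule out the degenerate possibilities $\mu(E_S) = 0$ and $\mu(E_S) = \mu(S)$. This is exactly where Proposition~\ref{prop:FinvProportion} does the work: since $E$ is $F$-invariant, $\mu(E \cap S)/\mu(E) = \mu(S)$, hence $\mu(E_S) = \mu(S)\,\mu(E)$. Because $0 < \mu(E) < 1$, we get $0 < \mu(E_S) < \mu(S)$, so $E_S$ is neither null nor co-null in $S$. Therefore $E_S$ is a nontrivial $F_S$-invariant set, and $F_S$ is not ergodic. Taking contrapositives, ergodicity of $F_S$ implies ergodicity of $F$.

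One subtlety I would be careful about: the theorem as stated does not assume $F_S$ is a non-singular bijection, and indeed Example~\ref{ex:nonsurjective} shows $F_S$ can fail to be surjective. However, for the contrapositive argument I only need the notion of an $F_S$-invariant set to make sense and the forward-invariance computation above, which is valid regardless; and the measure-theoretic estimate from Proposition~\ref{prop:FinvProportion} is likewise unconditional. So the argument goes through even in the non-surjective case — the statement ``$F_S$ is ergodic'' is simply vacuously compatible with the conclusion, or one works within the essential range of $F_S$. I would add a sentence noting that the converse implication (ergodicity of $F$ implies ergodicity of $F_S$) requires more care precisely because of this non-surjectivity issue, which is presumably why the theorem is stated as a one-directional implication.

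The main obstacle I anticipate is the rigorous handling of the ``up to a null set'' bookkeeping in verifying $F_S$-invariance of $E_S$ — in particular, confirming that the sets $S_n$ are measurable, that $F_S$ is defined a.e. on $E_S$ (which is Lemma~\ref{lemma:enterS}), and that backward-invariance holds modulo null sets even when $F_S$ is not surjective. None of this is deep, but it is the step where precision matters.
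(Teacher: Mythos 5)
Your argument is correct and is essentially the paper's proof read contrapositively: both hinge on the observation that $E \cap S$ is $F_S$-invariant whenever $E$ is $F$-invariant, and both dispose of the degenerate cases using Proposition~\ref{prop:FinvProportion} (the paper routes this last step through Proposition~\ref{prop:EcontainsSergodic}, which is itself a corollary of Proposition~\ref{prop:FinvProportion}). Your explicit verification that $E \cap S$ is $F_S$-invariant, and your remarks on the non-surjectivity of $F_S$, are if anything more careful than the published proof, which asserts the invariance of $E \cap S$ without comment.
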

\begin{proof}
  Suppose $F_S$ is ergodic and let $E \subseteq X$ be an $F$-invariant
  set.  This would imply that $E \cap S$ is
  $F_S$-invariant, and so ergodicity of $F_S$ would force $S \subseteq
  E$.  The contrapositive of Proposition~\ref{prop:EcontainsSergodic}
  then shows $F$ is ergodic.
\end{proof}

\begin{example}
  \label{ex:ergodic_aiet}
  With Theorem~\ref{thm:F_iff_FS_ergodic}, we now have a tool that can
  sometimes be used to easily establish the ergodicity of certain
  non-singular transformations.  In particular, consider affine interval
  exchanges such as those described in Example~\ref{ex:pair_rotation}.
  The maps described in that example give a 3-parameter family of affine
  interval exchanges, corresponding to the lengths of the permuted
  subintervals.  It is easy to see that the first-return map, $F_S$, of
  such a map is itself a circle rotation where points are translated
  (rotated) by $\mu(B) + \rho^{-1} \mu(D)$, where $\mu$ refers to the
  Lebesgue measure.  Thus this $F_S$, and hence $F$, will be ergodic
  when $\mu(B) + \rho^{-1} \mu(D)$ is an irrational number.  Note
  however that $F^2$ \emph{is not} ergodic, as each of the rotated
  circles $A \cup B$ and $C \cup D$ is an invariant subset of $F^2$.
\end{example}

\section{The Discrete Maharam Extension}
\label{sec:DiscreteMaharam}
In \cite{Maharam}, Maharam gave a construction
which associates to a non-singular transformation on a
probability space a measure-preserving transformation on a space of
infinite measure.  We now describe a construction which is similar,
and can be thought of as a rescaling of the Maharam extension, but
restricted to a measure zero subset of the space introduced by the
Maharam extension.  Note that since we are restricting to a measure
zero subset, ergodic theoretic results of Maharam do not instantly
carry over to the setting we are about to describe.

Throughout this section we again consider a probability space $(X,
\mathcal{B}, \mu)$, a measure-preserving transformation $T$, a scaling
involution $\Phi$ with scaling ratio $\rho > 1$, and let $F$ be
the reciprocal transformation $F = \Phi T$.

We will place a particular measure on $X \times \mathbb{Z}$ and
construct a map $\widetilde{F}$ on $X \times \mathbb{Z}$ which
preserves this measure and has $F$ as a factor.

First, let $\pi : X \times \mathbb{Z} \to X$ denote the projection onto
the first factor.  For any $E \subseteq X \times \mathbb{Z}$ and any
$n \in \mathbb{Z}$, let $E_n$ denote the set $E \cap (X \times
\{n\})$.  We will refer to $E_n$ as the \emph{$n$-th level of $E$}.

We define a measure $\widetilde{\mu}$ on $X \times \mathbb{Z}$ as
follows: for each measurable $E \subseteq X \times \mathbb{Z}$, we
declare
\[
\widetilde{\mu}(E) = \sum_{n \in \mathbb{Z}} \rho^{-n} \mu(\pi(E_n))
\]
where $\rho$ is the scaling ratio of $\Phi$.  That is, we imagine $X
\times \mathbb{Z}$ as consisting of copies of $X$ scaled by powers of
$\rho$ so that ``higher'' copies of $X$ are smaller than ``lower''
copies.  Note that $\widetilde{\mu}$ is an infinite, but
$\sigma$-finite, measure on $X \times \mathbb{Z}$.

Now consider the map $\widetilde{F} : X \times \mathbb{Z} \to X
\times \mathbb{Z}$ defined by
\[
\widetilde{F}(x, n) = \begin{cases}
  (F(x), n+1) &\text{ if } x \in T^{-1}(S) \\
  (F(x), n-1) &\text{ if } x \in T^{-1}(\Phi(S))
\end{cases}
\]
Notice $F\pi = \pi\widetilde{F}$.

We will refer to $\widetilde{F} : X \times \mathbb{Z} \to X \times
\mathbb{Z}$ together with the measure $\widetilde{\mu}$ as the
\emph{discrete Maharam extension} of $F : X \to X$ with measure $\mu$.
(The $\sigma$-algebra on $X \times \mathbb{Z}$ is simply the product $\sigma$-algebra
of the $\sigma$-algebra $\mathcal{B}$ on $X$ with the discrete
$\sigma$-algebra, i.e. the power set, on $\mathbb{Z}$.)

\begin{example}
  \label{ex:discrete_maharam_example}
  We now describe the discrete Maharam extension of the reciprocal
  transformation from Example~\ref{ex:pair_rotation}.  Here $X$ is in
  the interval $[0, 1)$, and so the levels $X \times \mathbb{Z}$ are
  scaled versions of the interval; we may think of $X \times \{n\}$
  as $[0, \rho^{-n}) = [0, 2^{-n})$.  The level $X \times \{n\} =
  [0, 2^{-n})$ is partitioned into intervals $A_n$, $B_n$,
  $C_n$, $D_n$, scaled copies of the original $A$, $B$, $C$,
  $D$, subintervals which $X$ was partitioned into.  The
  transformation $\widetilde{F}$ acts by moving points in
  $A_n$, $B_n$ into the scaled images of $T(A)$ and $T(B)$ in
  $X \times \{n+1\}$, and points in $C_n$ and $D_n$ are moved
  into the scaled images of $T(C)$ and $T(D)$ in $X \times
  \{n-1\}$.  See Figure~\ref{fig:discrete_maharam}.
\end{example}
  
  \begin{figure}[h!]
    \centering
        \begin{tikzpicture}[xscale=4]
      \begin{scope}
        \node[scale=0.8] at (-0.4, 0) {$X \times \{n\}$};


        \foreach \x in {0, 0.242, 0.333, 0.707, 1} {
          \draw (\x, -0.2) -- (\x, 0.2);
        }
        

        \draw[ultra thick, black!25!white] (0, 0) -- node[above, scale=0.5] {$A_n$} (0.242, 0); 
        \draw[ultra thick, black!75!white] (0.242, 0) -- node[above, scale=0.5]
        {$B_n$} (0.333, 0); 
        \draw[ultra thick, black!50!white] (0.333, 0) -- node[above,
          scale=0.5] {$C_n$} (0.707, 0); 
        \draw[ultra thick, black] (0.707, 0) -- node[above,
          scale=0.5] {$D_n$} (1, 0); 

        \draw[->] (.121, 0.5) .. controls (0.121, 1) and (0.379, 1.5)
        .. (0.379, 2);
        \draw[->] (0.2875, 0.5) .. controls (0.2875, 1) and (0.2123,
        1.5) .. (0.2123, 2);

        \draw[->] (0.52, -0.5) .. controls (0.52, -1) and (0.479,
        -1.5) .. (0.479, -2);
        \draw[->] (0.8535, -0.5) .. controls (0.8535, -1) and (0.073,
        -1.5) .. (0.073, -2);

      \end{scope}

      \begin{scope}[yshift=1in,xscale=0.5]
        \node[scale=0.8] at (-1, 0) {$X \times \{n+1\}$};
        \foreach \x in {0, 0.333, 0.516, 1} {
          \draw (\x, -0.2) -- (\x, 0.2);
        }

        \draw[gray] (0, 0) -- (0.33, 0);
        \draw[ultra thick, black!25!white] (0.516, 0) -- node[above=4pt, scale=0.5]
             {$\widetilde{F}(A_n)$} (1, 0); 
        \draw[ultra thick, black!75!white] (0.333, 0) -- node[above=4pt, scale=0.5]
             {$\widetilde{F}(B_n)$} (0.516, 0);
      \end{scope}

      \begin{scope}[yshift=-1in,xscale=2]
        \node[scale=0.8] at (-0.25, 0) {$X \times \{n-1\}$};
        \foreach \x in {0, 0.146, 0.333, 1} {
          \draw (\x, -0.2) -- (\x, 0.2);
        }

        \draw[gray] (0.33, 0) -- (1, 0);
        \draw[ultra thick, black!50!white] (0.146, 0) -- node[above, scale=0.5]
             {$\widetilde{F}(C_n)$} (0.333, 0); 
        \draw[ultra thick, black] (0, 0) -- node[above, scale=0.5]
             {$\widetilde{F}(D_n)$} (0.146, 0); 
      \end{scope}

      \begin{scope}[yshift=-1.5in]
        \node at (0, 0) {$\vdots$};
      \end{scope}

      \begin{scope}[yshift=1.5in]
        \node at (0, 0) {$\vdots$};
      \end{scope}

    \end{tikzpicture}
    \caption{The discrete Maharam extension of the reciprocal
      transformation described in Example~\ref{ex:pair_rotation}.}
    \label{fig:discrete_maharam}
  \end{figure}

That is, we apply $F$ to each copy of $X$, but then move points up or
down a level depending on how $F$ scales at that point (i.e., based on
the value of $\frac{dF_*\mu}{d\mu}$).  Those points in $T^{-1}(S)$ will first be moved by
$T$ into $S$, then scaled by $\rho$ when $\Phi$ is applied.  By moving
these points up or down one level (to a copy of $X$ whose
$\widetilde{\mu}$-measure shrinks by $\rho^{-1}$ or grows by $\rho$), we effectively
cancel out the scaling effect of $\Phi$.  That is, the measure
$\widetilde{\mu}$ we have defined is preserved by $\widetilde{F}$.
Before proving this we show that $\widetilde{F}$ is always bijective
so that we may consider images of sets instead of their preimages when
proving $\widetilde{\mu}$ is preserved.

\begin{lemma}
  \label{lemma:FT_bijective}
  The discrete Maharam extension $\widetilde{F}$ associated to a
  reciprocal transformation $F = \Phi T$ is bijective.
\end{lemma}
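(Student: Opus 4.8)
The plan is to verify bijectivity directly by producing an explicit two-sided inverse, exploiting the fact that $F = \Phi T$ is already a bijection on $X$ (as the composition of two bijections) and that the level-shift in $\widetilde F$ is deterministic. First I would observe that $\widetilde F$ is \emph{injective}: if $\widetilde F(x,n) = \widetilde F(x',n')$, then comparing first coordinates gives $F(x) = F(x')$, hence $x = x'$ since $F$ is injective; once $x = x'$, the level of the image is determined by whether $x \in T^{-1}(S)$ or $x \in T^{-1}(\Phi(S))$ (these partition $X$ since $T$ is a bijection and $X = S \sqcup \Phi(S)$), so the same level-shift is applied and $n = n'$.

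Next I would establish \emph{surjectivity}. Given a target point $(y,m) \in X \times \mathbb{Z}$, set $x = F^{-1}(y)$, which exists since $F$ is a bijection on $X$. Now $x$ lies in exactly one of the two sets $T^{-1}(S)$ or $T^{-1}(\Phi(S))$; in the first case $\widetilde F(x,n) = (y, n+1)$, so choosing $n = m-1$ hits $(y,m)$; in the second case $\widetilde F(x,n) = (y,n-1)$, so choosing $n = m+1$ works. Either way $(y,m)$ has a preimage, so $\widetilde F$ is onto. Combining the two steps, $\widetilde F$ is a bijection, and its inverse is given explicitly by
\[
\widetilde F^{-1}(y,m) = \begin{cases}
  (F^{-1}(y), m-1) & \text{if } F^{-1}(y) \in T^{-1}(S) \\
  (F^{-1}(y), m+1) & \text{if } F^{-1}(y) \in T^{-1}(\Phi(S)).
\end{cases}
\]
Measurability of $\widetilde F$ and its inverse is immediate from measurability of $F$, $F^{-1}$, and the sets $T^{-1}(S)$, $T^{-1}(\Phi(S))$, together with the product $\sigma$-algebra structure; and since we work up to null sets, the decomposition $X = T^{-1}(S) \sqcup T^{-1}(\Phi(S))$ is exact enough for the argument.

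I do not anticipate a serious obstacle here — the statement is essentially bookkeeping — but the one point requiring a moment's care is confirming that the two cases in the definition of $\widetilde F$ really do partition the base: this uses that $T$ is a \emph{bijection} (so $T^{-1}(S)$ and $T^{-1}(\Phi(S))$ are disjoint with union $X$, mirroring $X = S \sqcup \Phi(S)$), which is exactly the hypothesis under which $F = \Phi T$ is defined. If one wished to relax bijectivity of $T$ this step would need rethinking, but under the standing assumptions it is automatic.
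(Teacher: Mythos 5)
Your proof is correct and is essentially the paper's argument viewed from the other side: the paper identifies $\widetilde{F}^{-1}(S\times\{m\})=T^{-1}(\Phi(S))\times\{m+1\}$ and $\widetilde{F}^{-1}(\Phi(S)\times\{m\})=T^{-1}(S)\times\{m-1\}$ and notes $\widetilde{F}$ is a bijection on each piece, while you package the same observation as an explicit two-sided inverse built from $F^{-1}$ and the partition $X=T^{-1}(S)\sqcup T^{-1}(\Phi(S))$. Both hinge on exactly the same facts (bijectivity of $F$ and the determinacy of the level shift), so there is nothing to add.
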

\begin{proof}
  First observe that for each integer $m$, the preimage
  $\widetilde{F}^{-1}(S \times \{m\})$ is $F^{-1}(S) \times \{m +
  1\}$ and $F^{-1}(S) = (\Phi T)^{-1}(S) = T^{-1} \Phi(S)$.
  Similarly, $F^{-1}(\Phi (S) \times \{m\}) = F^{-1}(\Phi(S)) \times
  \{m - 1\}$ and $F^{-1}(\Phi(S)) = (\Phi T)^{-1} \Phi(S) = T^{-1}\Phi
  \Phi(S) = T^{-1}(S)$.  As $F$ is a bijection, $\widetilde{F}$
  restricted to $T^{-1} \Phi(S) \times \{m + 1\}$ is a bijection onto
  $S \times \{m\}$, and similarly $\widetilde{F}$ restricted to
  $T^{-1}(S) \times \{m - 1\}$ is a bijection onto $\Phi(S) \times
  \{m\}$.  As each level of $X \times \mathbb{Z}$ is decomposted into
  a disjoint union of $S \times \{m\}$ and $\Phi(S) \times \{m\}$
  sets on which $\widetilde{F}$ is bijective, this shows that
  $\widetilde{F}$ is a bijection.
\end{proof}

\begin{proposition}
  \label{prop:FT_preservesMuT}
  The map $\widetilde{F}$ preserves the measure $\widetilde{\mu}$.
\end{proposition}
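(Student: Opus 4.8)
The plan is to verify $\widetilde{\mu}(\widetilde{F}(E)) = \widetilde{\mu}(E)$ for an arbitrary measurable $E \subseteq X \times \mathbb{Z}$ by decomposing $E$ into pieces on which $\widetilde{F}$ has a single, well-understood behavior, and using the definition of $\widetilde{\mu}$ level by level. By Lemma~\ref{lemma:FT_bijective} we may freely work with images rather than preimages. Since both $\widetilde{\mu}$ and the action of $\widetilde{F}$ split over the integer levels, it suffices to treat a set $E$ contained in a single level $X \times \{n\}$; the general case follows by countable additivity since the levels are disjoint and $\widetilde{F}$ maps each level into the union of two adjacent levels.

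So fix $n$ and write $E \subseteq X \times \{n\}$. Decompose $E = E' \sqcup E''$ where $\pi(E') \subseteq T^{-1}(S)$ and $\pi(E'') \subseteq T^{-1}(\Phi(S))$; these are disjoint because $T^{-1}(S)$ and $T^{-1}(\Phi(S))$ partition $X$ (as $T$ is a bijection and $X = S \sqcup \Phi(S)$). By definition $\widetilde{F}(E') \subseteq X \times \{n+1\}$ with $\pi(\widetilde{F}(E')) = F(\pi(E'))$, and $F = \Phi T$ sends $\pi(E') \subseteq T^{-1}(S)$ first by $T$ into $S$, then by $\Phi$, so that $\mu(F(\pi(E'))) = \rho\,\mu(\pi(E'))$ by the scaling-involution property. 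Hence
\[
\widetilde{\mu}(\widetilde{F}(E')) = \rho^{-(n+1)} \mu(\pi(\widetilde{F}(E'))) = \rho^{-(n+1)} \cdot \rho\, \mu(\pi(E')) = \rho^{-n} \mu(\pi(E')) = \widetilde{\mu}(E').
\]
Symmetrically, $\widetilde{F}(E'') \subseteq X \times \{n-1\}$ with $\pi(\widetilde{F}(E'')) = F(\pi(E''))$, and $F$ sends $\pi(E'') \subseteq T^{-1}(\Phi(S))$ by $T$ into $\Phi(S)$ then scales by $\rho^{-1}$ under $\Phi$, giving $\mu(F(\pi(E''))) = \rho^{-1}\mu(\pi(E''))$, so
\[
\widetilde{\mu}(\widetilde{F}(E'')) = \rho^{-(n-1)} \cdot \rho^{-1} \mu(\pi(E'')) = \rho^{-n} \mu(\pi(E'')) = \widetilde{\mu}(E'').
\]
Since $\widetilde{F}(E') \subseteq X \times \{n+1\}$ and $\widetilde{F}(E'') \subseteq X \times \{n-1\}$ are supported on different levels, they are disjoint, so $\widetilde{\mu}(\widetilde{F}(E)) = \widetilde{\mu}(\widetilde{F}(E')) + \widetilde{\mu}(\widetilde{F}(E'')) = \widetilde{\mu}(E') + \widetilde{\mu}(E'') = \widetilde{\mu}(E)$, as desired.

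I do not expect a serious obstacle here; the construction of $\widetilde{\mu}$ was reverse-engineered precisely so that the level-shift cancels the Radon–Nikodym factor of $\Phi$. The one point requiring a little care is bookkeeping the partition $X = T^{-1}(S) \sqcup T^{-1}(\Phi(S))$ and checking that the image pieces land on distinct levels so additivity applies cleanly — and, as always in this paper, that everything is interpreted up to $\mu$-null sets, which is harmless since $T$ and $\Phi$ are non-singular. Passing from single-level sets to arbitrary $E$ is routine $\sigma$-additivity, using that $\widetilde{\mu}$ is $\sigma$-finite so the rearrangement of the (convergent, nonnegative) double sum is justified.
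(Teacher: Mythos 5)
Your proof is correct and follows essentially the same route as the paper: both decompose each level into the part lying over $T^{-1}(S)$ (moving up, measure scaled by $\rho$) and the part over $T^{-1}(\Phi(S))$ (moving down, scaled by $\rho^{-1}$), and observe that the level weights $\rho^{-(n\pm 1)}$ exactly cancel these factors. The only cosmetic difference is bookkeeping --- the paper collects the contributions to each level of the image $\widetilde{F}(E)$ and notes the resulting series is a rearrangement of the series for $\widetilde{\mu}(E)$, while you sum over source levels and invoke injectivity of $\widetilde{F}$ for disjointness of the images.
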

\begin{proof}
  For any $E \subseteq X \times \mathbb{Z}$ and any $n \in
  \mathbb{Z}$, we break the $n$-th level $E_n$ up into two pieces:
  the portion which $\widetilde{F}$ will move up a level (denoted $E_n^+$), and the portion
  which will move down a level ($E_n^-$).  That is,
  \begin{align*}
    E_n^+ &:= \left(\pi(E_n) \cap T^{-1}(S)\right) \times \{n\}, \text{
      and } \\
    E_n^- &:= \left(\pi(E_n) \cap T^{-1}(\Phi(S))\right) \times \{n\}.
  \end{align*}
  The $n$-th level of the image $\widetilde{F}(E)$ is
  $\widetilde{F}(E_{n-1}^+) \sqcup \widetilde{F}(E_{n+1}^-)$, and so
  has measure
  \begin{align*}
    & \rho^{-n}
    \left(\mu\left(\pi\left(\widetilde{F}\left(E_{n-1}^+\right)\right)\right)
    +
    \mu\left(\pi\left(\widetilde{F}\left(E_{n+1}^-\right)\right)\right)\right)
    \\
    =& \rho^{-n}
    \left(\mu\left(F\left(\pi\left(E_{n-1}^+\right)\right)\right) + \mu\left(F\left(\pi\left(E_{n+1}^-\right)\right)\right)\right)
  \end{align*}
  However, by definition of the sets $E_{n \pm 1}^{\mp}$,
  \begin{align*}
    \mu\left(\pi\left(F\left(E_{n-1}^+\right)\right)\right) &= \rho
    \mu\left(\pi\left(E_{n-1}^+\right)\right), \text{ and } \\
    \mu\left(\pi\left(F\left(E_{n+1}^-\right)\right)\right) &= \rho^{-1}\mu\left(\pi\left(E_{n+1}^-\right)\right).
  \end{align*}
  Thus the $n$-th level of $\widetilde{F}(E)$ has measure
  \[
  \rho^{-(n-1)}\mu\left(\pi\left(E_{n-1}^+\right)\right) +
  \rho^{-(n+1)}\mu\left(\pi\left(E_{n+1}^-\right)\right).
  \]
  That is, the terms of
  \[
  \widetilde{\mu}\left(\widetilde{F}\left(E\right)\right)
  =
  \sum_{n\in\mathbb{Z}} \rho^{-n}\mu\left(\pi\left(\widetilde{F}\left(E_n\right)\right)\right)
  \]
  are the same as the terms of
  \[
  \widetilde{\mu}(E) = \sum_{n\in\mathbb{Z}}\rho^{-n}\mu\left(\pi\left(E_n\right)\right),
  \]
  just slightly reordered.  (Note that the terms of all our series are
  non-negative, and so these series either diverge to infinity or are
  absolutely convergent.  In either case, reordering the terms of the
  series does not change the series' value.)
\end{proof}

A natural question to consider is whether ergodicity of a reciprocal
transformation $F$ is sufficient to ensure the ergodicity of its discrete
Maharam extension $\widetilde{F}$.  Using
Lemma~\ref{lemma:Fn_not_ergodic} and
Proposition~\ref{prop:Ftilde_not_ergodic} below,
Example~\ref{ex:Ftilde_not_ergodic} gives a counterexample to the
reasonable-seeming conjecture that ergodicity of $F$ implies
ergodicity of $\widetilde{F}$.

\begin{lemma}
  \label{lemma:Fn_not_ergodic}
  Suppose $F = \Phi T$ is a reciprocal transformation where some
  iterate $F^n$ is measure-preserving.  Then the associated discrete
  Maharam extension can not be ergodic.
\end{lemma}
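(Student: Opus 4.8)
The plan is to realize $\widetilde{F}$ as a $\mathbb{Z}$-extension of $F$ whose defining cocycle the hypothesis forces to be periodic along orbits, so that every forward $\widetilde{F}$-orbit stays in a bounded band of levels. Write $\widetilde{F}(x,m) = (Fx,\, m + \varphi(x))$, where $\varphi = +1$ on $T^{-1}(S)$ and $\varphi = -1$ on $T^{-1}(\Phi(S))$, and let $\varphi_k = \varphi + \varphi\circ F + \cdots + \varphi\circ F^{k-1}$ be the associated cocycle sums, so that the $\mathbb{Z}$-coordinate of $\widetilde{F}^k(x,m)$ is $m + \varphi_k(x)$.

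The first and essentially only substantive step is to restate the hypothesis in terms of $\varphi$. Since $T$ preserves $\mu$ and $\Phi$ has ratio $\rho$, the map $F$ satisfies $\mu(F(E)) = \rho\,\mu(E)$ for every $E \subseteq T^{-1}(S)$ and $\mu(F(E)) = \rho^{-1}\mu(E)$ for every $E \subseteq T^{-1}(\Phi(S))$. Partitioning $X$ according to whether or not $F^j x \in T^{-1}(S)$ for $0 \le j < n$, and applying the previous observation one step at a time, one sees that $F^n$ multiplies the measure of the itinerary-class on which exactly $a$ of $x, Fx, \ldots, F^{n-1}x$ lie in $T^{-1}(S)$ by the factor $\rho^{2a-n}$, while $\varphi_n \equiv 2a-n$ on that class. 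Since $\rho > 1$, the assumption that $F^n$ is measure preserving is therefore equivalent to $\varphi_n(x) = 0$ for $\mu$-almost every $x$. As $F$ is nonsingular this propagates along orbits: there is a full-measure, $F$-invariant set $X_\infty \subseteq X$ on which $\varphi_n(F^i x) = 0$ for every $i \ge 0$, and hence $\varphi_{k+n} = \varphi_k$ for all $k \ge 0$; thus for $x \in X_\infty$ the sequence $(\varphi_k(x))_{k\ge 0}$ is periodic with period dividing $n$, and in particular bounded.

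It remains to produce a non-trivial $\widetilde{F}$-invariant set. For $(x,m) \in X_\infty\times\mathbb{Z}$ let $M(x,m) := m + \sup_{k\ge 0}\varphi_k(x)$ be the highest level visited by the forward $\widetilde{F}$-orbit of $(x,m)$; by the periodicity above this supremum is finite, is attained infinitely often, and is at least $\varphi_0 = 0$. Because the level sequence of an $\widetilde{F}$-orbit is periodic, dropping its first term does not change its supremum, so $M\circ\widetilde{F} = M$; hence $E := \{(x,m) : M(x,m) \ge 1\}$ is a measurable set with $\widetilde{F}^{-1}(E) = E$, and therefore (as $\widetilde{F}$ is a bijection, by Lemma~\ref{lemma:FT_bijective}) an $\widetilde{F}$-invariant set. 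It is non-null, since $E \supseteq X\times\{1\}$ and so $\widetilde{\mu}(E) \ge \rho^{-1}\mu(X) > 0$. Its complement is non-null as well: since $\sup_{k\ge0}\varphi_k$ is almost everywhere finite, there is a $K$ with $\mu\big(\{x : \sup_{k\ge0}\varphi_k(x) \le K\}\big) > 0$, and the level $m = -K$ of $E^c$ projects exactly onto this set, whence $\widetilde{\mu}(E^c) \ge \rho^{K}\mu\big(\{x : \sup_{k\ge0}\varphi_k(x)\le K\}\big) > 0$. Thus $\widetilde{F}$ admits a non-trivial invariant set and is not ergodic.

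The one place requiring genuine care is the equivalence ``$F^n$ measure preserving $\iff \varphi_n = 0$ almost everywhere.'' One must check that the local scaling factors $\rho^{\pm 1}$ of $F$ genuinely multiply under iteration --- this is where the itinerary partition and the hypothesis that $T$ preserves $\mu$ enter --- and then invoke $\rho > 1$ to pass from ``$\rho^{\varphi_n}\equiv 1$'' to the honest equality ``$\varphi_n \equiv 0$'' rather than some weaker congruence. After that the argument is routine: periodicity of the cocycle, invariance of $M$, and non-triviality of $E$ and $E^c$ all follow by bookkeeping, the only subtlety being to fix once and for all the full-measure invariant set $X_\infty$ so that $M$ is genuinely $\widetilde{F}$-invariant.
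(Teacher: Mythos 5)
Your proof is correct and rests on the same idea as the paper's: since $F^n$ is measure-preserving and $\rho>1$, the level cocycle must satisfy $\varphi_n=0$ a.e., so orbits of $\widetilde{F}$ are confined to a bounded band of levels, which is incompatible with ergodicity. You simply make explicit the final step the paper leaves implicit, by exhibiting the invariant set $\{M\ge 1\}$ and checking that it and its complement are non-null.
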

\begin{proof}
  As $F^n$ is measure-preserving and each iterate of $F$ modifies the
  measure of a subset by a factor of $\rho$ or $\rho^{-1}$, for each
  $0 \leq m \leq n$
  the Radon-Nikodym derivative $\frac{dF^m_*\mu}{d\mu}$ is bounded below
  by $\rho^{-n}$ and above by $\rho^n$.  Thus any point of $X \times
  \{0\}$ can not visit any level $X \times \{k\}$ for $|k| > n$, as it
  must return to $X \times \{0\}$ at the $n$-th iterate.
\end{proof}

\begin{proposition}
  \label{prop:Ftilde_not_ergodic}
  If $F = \Phi T$ is a reciprocal transformation such that $\mu(S_1)$
  (in the notation of Lemma~\ref{lemma:conservativeFS}) has measure
  zero, then the discrete Maharam extension $\widetilde{F}$ is not
  ergodic. 
\end{proposition}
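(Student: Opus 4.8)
The plan is to show that the hypothesis $\mu(S_1) = 0$ is strong enough to force $F^2$ to be measure-preserving, and then to invoke Lemma~\ref{lemma:Fn_not_ergodic} with $n = 2$.

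First I would reinterpret the hypothesis. Since $\Phi$ is an involution with $\Phi^{-1}(S) = \Phi(S)$, for $x \in S$ we have $F(x) = \Phi(T(x)) \in S$ if and only if $T(x) \in \Phi(S)$; hence, up to a null set, $S_1 = S \setminus T^{-1}(S)$. So $\mu(S_1) = 0$ says exactly that $S \subseteq T^{-1}(S)$ almost everywhere, i.e.\ $T(S) \subseteq S$ almost everywhere. Because $T$ is a measure-preserving bijection and $\mu(T(S)) = \mu(S)$, this upgrades to $T(S) = S$, and therefore also $T(\Phi(S)) = \Phi(S)$, up to null sets. In other words, $\mu(S_1) = 0$ forces $T$ to respect the partition $X = S \sqcup \Phi(S)$.

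Granting this, I would track how $F$ acts on each half. If $E \subseteq S$, then $T(E) \subseteq S$, so $\mu(T(E)) = \mu(E)$ and $F(E) = \Phi(T(E)) \subseteq \Phi(S)$ with $\mu(F(E)) = \rho\,\mu(E)$; applying $F$ again, $F(E) \subseteq \Phi(S)$ gives $T(F(E)) \subseteq \Phi(S)$ and $F^2(E) = \Phi(T(F(E))) \subseteq S$ with $\mu(F^2(E)) = \rho^{-1}\rho\,\mu(E) = \mu(E)$. The symmetric computation for $E \subseteq \Phi(S)$ gives $\mu(F^2(E)) = \mu(E)$ as well. Hence $F^2$ is measure-preserving, and Lemma~\ref{lemma:Fn_not_ergodic} immediately yields that $\widetilde{F}$ is not ergodic.

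If one prefers to avoid quoting Lemma~\ref{lemma:Fn_not_ergodic}, the same analysis exhibits the invariant decomposition explicitly: since $F$ interchanges $S$ and $\Phi(S)$, the $\widetilde{F}$-orbit of any $(x,k)$ with $x \in S$ stays inside $A_k := (S \times \{k\}) \cup (\Phi(S) \times \{k+1\})$, alternating between its two levels, so each $A_k$ is $\widetilde{F}$-invariant with $0 < \widetilde{\mu}(A_k) < \infty$ and $X \times \mathbb{Z} = \bigsqcup_{k \in \mathbb{Z}} A_k$. The only point that needs a little care — and the one place the hypothesis is genuinely used — is the passage from $\mu(S_1) = 0$ to $T(S) = S$ together with the resulting bookkeeping of the scaling factors $\rho^{\pm 1}$; everything after that is immediate.
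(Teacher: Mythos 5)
Your proof is correct, and it arrives at the same waypoint as the paper---that $F^2$ is measure-preserving, after which Lemma~\ref{lemma:Fn_not_ergodic} finishes the job---but by a genuinely different and more elementary route. The paper establishes $S = S_2$ indirectly: it invokes Lemma~\ref{lemma:conservativeFS} to get conservativity of $F_S$, then argues that if some $S_n$ with $n>2$ had positive measure, its subsets would have to grow by a factor of $\rho^{n-2}$ infinitely often (since $\mu(S_1)=0$ means measure never decreases under $F_S$), contradicting $\mu(S)<\infty$. You instead observe that $S_1 = S \cap T^{-1}(\Phi(S)) = S \setminus T^{-1}(S)$, so $\mu(S_1)=0$ says $T(S) \subseteq S$ mod null, which upgrades to $T(S)=S$ (and hence $T(\Phi(S))=\Phi(S)$) because $T$ preserves the finite measure $\mu$; from there $S = S_2$ and the measure-preservation of $F^2$ drop out by direct bookkeeping of the factors $\rho^{\pm 1}$. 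This bypasses the recurrence/growth argument entirely, and your explicit invariant decomposition $X \times \mathbb{Z} = \bigsqcup_{k} A_k$ with $A_k = (S \times \{k\}) \cup (\Phi(S) \times \{k+1\})$ (valid because $T^{-1}(S)=S$ and $T^{-1}(\Phi(S))=\Phi(S)$ under your identification, so $\widetilde{F}$ really does alternate between the two pieces of $A_k$) even makes the proof self-contained, with no appeal to Lemma~\ref{lemma:Fn_not_ergodic}. What your route buys is brevity and transparency about where the hypothesis enters; what the paper's route buys is a template that generalizes to ruling out the higher $S_n$ one at a time, which is the shape of argument reused elsewhere in the section. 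Both are sound.
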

\begin{proof}
  Recall $\mu(F_S(E)) = \rho^{n-2} \mu(E)$ for each $E \subseteq S_n$.
  If $\mu(S_1) = 0$, then subsets of $S$ can never decrease in
  measure, as entering into $S_1$ is the only way that the image of a
  set under $F_S$ can decrease its measure.  By
  Lemma~\ref{lemma:conservativeFS}, $F_S$ is conservative, and so if
  some $S_n$ for $n > 2$ had positive measure, almost every point in
  $S_n$ must return to $S_n$ infinitely-often.  Thus if some $S_n$, $n
  > 2$, had positive measure, any subset of $S_n$ of positive measure
  must have its measure increase by a factor of $\rho^{n-2}$
  infinitely often.  However, $S$ has finite measure, and so if
  $\mu(S_1) = 0$ then $S = S_2$ up to a null set.  This implies $F^2$
  is in fact a measure-preserving transformation, and by
  Lemma~\ref{lemma:Fn_not_ergodic}, the associated discrete Maharam
  extension can not be ergodic.
\end{proof}

\begin{example}
  \label{ex:Ftilde_not_ergodic}
  Consider our running example of a reciprocal transformation from
  Example~\ref{ex:pair_rotation} and Example~\ref{ex:ergodic_aiet},
  and its associated discrete Maharam extension in
  Example~\ref{ex:discrete_maharam_example}.  As previously noted,
  when $\mu(B) + \rho^{-1}\mu(D)$ is irrational, the reciprocal transformation $F$ is
  ergodic.  Notice, in the notation of
  Lemma~\ref{lemma:conservativeFS}, that $S_1 = \emptyset$ and $S_2 =
  S$.  Hence the associated discrete Maharam extension from
  Example~\ref{ex:discrete_maharam_example} is not ergodic.
\end{example}

We thus see that ergodicity of $\widetilde{F}$ requires a more
restrictive condition than just ergodicity of $F$.  In order to relate
the ergodicity of $\widetilde{F}$ to that of the reciprocal
transformation $F = \Phi T$, we must first describe the ``Krieger
type'' and ratio set of non-singular transformations.

The study of Krieger types, the closely related notion of the ratio
set, and non-singular ergodic theory in general has its origin in a
simple question about whether a given measurable transformation
necessarily has any invariant measures.  The work of Murray and von
Neumann had established a connection between ergodic
theory and operator algebras \cite{MurrayVonNeumann}, which was utilized by Dye in
his study of orbit equivalence \cite{Dye}.  Shortly afterwards,
Ornstein provided the first example of a measurable transformation not
preserving any equivalent measure \cite{Ornstein}.  This initiated the study of the
various types of non-singular transformations that could exist, and
would lead to Krieger's classification of non-singular transformations
motivated by the existing classification of von Neumann algebras
\cite{Krieger12}.

Associated to each non-singular transformation $F$ is a collection of
non-negative real numbers called the transformation's \emph{ratio
set}, denoted $r(F)$.  The number $\lambda$ is an element of $r(F)$ if
for every set $E$ of positive measure and every $\epsilon > 0$ there
exists a subset $E' \subseteq E$ of positive measure and an integer
$n$ so that for every $x \in E'$, $F^n(x) \in E'$ and $\left| \frac{d
  F^n_*\mu}{d\mu}(x) - \lambda \right| < \epsilon$.  That is, the
ratio set essentially measures all of the proportions by which all
sets get scaled.  Computing the ratio set of a transformation is an
extremely difficult problem in general, though can sometimes be
accomplished in certain special settings; e.g., see
\cite{HamachiOsikawa},
\cite{ChoskiHawkinsPrasad}
\cite{Danilenko},
\cite{DooleyHawkinsRalston}, and
\cite{Furno}.

Non-singular transformations are characterized into various
\emph{Krieger types} based on their ratio set.  We say an ergodic
transformation $F$ has \emph{(Krieger) type $II$} if its ratio set is
$r(F) = \{1\}$, and \emph{(Krieger) type $III_\lambda$} if its ratio
set is $r(F) = \{0\} \cup \{\lambda^n \, \big| \, n \in \mathbb{Z}\}$
for a real number $\lambda > 0$.  It was shown in \cite{Krieger} that
type $II$ transformations always admit an equivalent invariant
measure, and correspondingly there are two subtypes: type $II_1$
transformations admit an invariant probability measure, whereas type
$II_\infty$ transformations admit an infinite invariant measure.  Type
$III$ transformations do not admit any invariant measures, and
similarly have a breakdown into subtypes.

The subtypes of type $III$ transformations are
described as being type $III_\lambda$ where $\lambda \in [0, 1]$.  Of
these subtypes there are three particular cases corresponding to $r(F)
= \{0, 1\}$ (type $III_0$), $r(F) = [0, \infty)$ (type $III_1$), and
  $r(F) = \{ \lambda^n \, \big| \, n \in \mathbb{Z}\}$ with $\lambda
  \in (0, 1)$.
Theorem~\ref{thm:Ftilde_ergodic} below shows that $\widetilde{F}$ is
ergodic (with respect to $\widetilde{\mu}$) 
precisely when $F$ is ergodic (with respect to $\mu$) is type
$III_\lambda$ where $\lambda = \frac{1}{\rho}$.


\begin{theorem}
  \label{thm:Ftilde_ergodic}
  Let $T : X \to X$ be a measure-preserving transformation of a
  probability space $(X, \mathscr{B}, \mu)$, $\Phi : X \to X$ a
  scaling involution with scaling ratio $\rho > 1$, $F = \Phi T$ their
  composition, and $\widetilde{F} : X \times \mathbb{Z} \to X \times
  \mathbb{Z}$ the corresponding discrete Maharam extension with
  measure $\widetilde{\mu}$.  The map
  $\widetilde{F}$ is $\widetilde{\mu}$-ergodic if and only if 
  $F$ is ergodic and type $III_{1/\rho}$.
\end{theorem}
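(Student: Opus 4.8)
The plan is to exploit the description of $\widetilde F$ as a skew product over $F$. Let $\sigma\colon X\times\mathbb Z\to X\times\mathbb Z$ be the shift $\sigma(x,n)=(x,n+1)$; it is a measurable bijection commuting with $\widetilde F$, and $\widetilde\mu(\sigma E)=\rho^{-1}\widetilde\mu(E)$ for all $E$, so $\widetilde\mu$ and the product of $\mu$ with counting measure have the same null sets --- in particular ergodicity of $\widetilde F$ may be tested against either. From the definition of $\widetilde F$ one has $\widetilde F^{N}(x,n)=(F^Nx,\,n+L_N(x))$ for an integer-valued cocycle $L_N$, and by Proposition~\ref{prop:FT_preservesMuT} (preservation of $\widetilde\mu$) together with $\pi\widetilde F=F\pi$ one gets $\rho^{L_N(x)}=\lim_{B\to x}\mu(F^NB)/\mu(B)$, equivalently $\frac{dF^N_*\mu}{d\mu}(x)=\rho^{-L_N(F^{-N}x)}$. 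Two elementary observations are used throughout. First, since $F=\Phi T$ distorts measure only by powers of $\rho$, $\frac{dF^n_*\mu}{d\mu}$ is $\{\rho^k:k\in\mathbb Z\}$-valued; hence $r(F)\subseteq\{0\}\cup\{\rho^k\}$, and when $\lambda=\rho^k$ the defining condition for $\rho^k\in r(F)$, applied with $\epsilon$ below the distance from $\rho^k$ to the nearest other power of $\rho$, produces a set $E'$ and an $n$ with $F^n(E')\subseteq E'$ on which $\frac{dF^n_*\mu}{d\mu}\equiv\rho^k$ exactly. Second, if $E\subseteq X\times\mathbb Z$ is $\widetilde F$-invariant then $\bigcup_k\sigma^kE=A\times\mathbb Z$ with $A:=\bigcup_n\pi(E_n)$, this set is $\widetilde F$-invariant (Lemma~\ref{lemma:FT_bijective}), and applying $\pi$ to $\widetilde F(A\times\mathbb Z)=A\times\mathbb Z$ gives $F(A)=A$; i.e.\ $A$ is $F$-invariant.

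For the implication ``$F$ ergodic and type $III_{1/\rho}$ $\Rightarrow$ $\widetilde F$ ergodic'', let $E$ be $\widetilde F$-invariant with $\widetilde\mu(E),\widetilde\mu(E^c)>0$ (writing $E^c$ for the complement in $X\times\mathbb Z$). By the second observation and ergodicity of $F$, for a.e.\ $x$ the column $E^x:=\{n:(x,n)\in E\}$ is a nonempty proper subset of $\mathbb Z$; such a set is not simultaneously upward- and downward-closed, so it has a descent ($n\in E^x$, $n+1\notin E^x$) or an ascent ($n\in E^x$, $n-1\notin E^x$). The sets $P^+:=E\cap\sigma^{-1}(E^c)$ and $P^-:=E\cap\sigma(E^c)$ are $\widetilde F$-invariant, column $x$ meets $P^+$ or $P^-$ for a.e.\ $x$, and $\bigcup_n\pi(P^\pm_n)$ is $F$-invariant; hence one of them, say $\bigcup_n\pi(P^+_n)$, has full measure. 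Replacing $E$ by a shift $\sigma^kE$, we may assume $W:=\{x:(x,0)\in E,\ (x,1)\notin E\}$ has positive measure. Now apply $\rho\in r(F)$ to $W$ with small $\epsilon$ (first observation): there are $W_1\subseteq W$ of positive measure and $N$ with $F^N(W_1)\subseteq W_1$ and $\frac{dF^N_*\mu}{d\mu}\equiv\rho$ on $W_1$; since $W_1\subseteq F^{-N}(W_1)$ this forces $L_N\equiv-1$ on $W_1$, so $\widetilde F^{N}(x,1)=(F^Nx,0)$ for $x\in W_1$. But $x\in W_1\subseteq W$ means $(x,1)\in E^c$, so invariance of $E^c$ gives $(F^Nx,0)\in E^c$, while $F^Nx\in W_1\subseteq W$ gives $(F^Nx,0)\in E$ --- a contradiction on a positive-measure set. (The case where only $\bigcup_n\pi(P^-_n)$ is full is symmetric, using $\rho^{-1}\in r(F)$.) Thus $\widetilde F$ has no nontrivial invariant set.

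For the converse, assume $\widetilde F$ is ergodic. If $A\subseteq X$ is $F$-invariant then $A\times\mathbb Z$ is $\widetilde F$-invariant with $\widetilde\mu(A\times\mathbb Z)=\mu(A)\sum_n\rho^{-n}$, which vanishes iff $\mu(A)=0$; so $F$ is ergodic, hence conservative (ergodicity with a non-atomic measure forces conservativity), hence has a Krieger type with $r(F)\setminus\{0\}$ a closed subgroup of $\mathbb R_{>0}$ --- contained in $\{\rho^k\}$ by the first observation, so equal to $\{1\}$ or $\{\rho^{mk}:k\in\mathbb Z\}$ for some $m\ge1$. We must exclude $m\ge2$ and type $II$. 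Translating through the cocycle dictionary of the first paragraph, the group of essential values of the $\mathbb Z$-cocycle $L$ equals $\{k:\rho^k\in r(F)\}$; so if $F$ is not type $III_{1/\rho}$ then $1$ is not an essential value and this group is $m\mathbb Z$ with $m\in\{0,2,3,\ldots\}$. By the standard reduction of a cocycle to its group of essential values, $L$ is then cohomologous to an $m\mathbb Z$-valued cocycle; for $m\ge2$ this means $L\bmod m$ is a coboundary $\bar h\circ F-\bar h$ with $\bar h\colon X\to\mathbb Z/m\mathbb Z$, so $\psi(x,n):=(n-\bar h(x))\bmod m$ is $\widetilde F$-invariant, nonconstant, with each level set of positive $\widetilde\mu$-measure; for $m=0$ (type $II$) one uses instead $\psi(x,n):=n-h(x)$ with $h:=\log_\rho(d\nu/d\mu)\colon X\to\mathbb Z$ for an $F$-invariant $\nu\sim\mu$, after normalizing $\nu$ so $h$ is integer-valued. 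Either way $\widetilde F$ admits an invariant set that is neither null nor co-null, contradicting ergodicity; hence $F$ is type $III_{1/\rho}$.

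I expect the principal difficulty to be bookkeeping: getting the relation between the paper's $\frac{dF^n_*\mu}{d\mu}$, the integer cocycle $L_n$, and the sign/time-direction exactly right, since an error there breaks the contradictions in both implications. The only genuinely non-elementary ingredients are in the converse: the cocycle-reduction result invoked for the $III_{1/\rho^m}$ ($m\ge2$) case, together with the standard facts that ergodicity of a conservative non-singular map makes $r(F)\setminus\{0\}$ a closed subgroup of $\mathbb R_{>0}$ and that $\{k:\rho^k\in r(F)\}$ is precisely the essential-value group of $L$. The ``$\Leftarrow$'' direction and the type $II$ case are self-contained given only the ratio-set definition and the two observations above.
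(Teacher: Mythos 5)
Your proof is correct, but it is a genuinely different argument from the paper's in both directions. For the ``if'' direction the paper argues by sweeping out: it uses ergodicity of $F$ to find, for a.e.\ $x$ in the projection of a supposed unreachable set, a return time into $\pi(E_k)$, and then stratifies by return time and Radon--Nikodym exponent. You instead take a nontrivial $\widetilde F$-invariant set $E$, show via ergodicity of $F$ that a.e.\ fiber $E^x$ is a nonempty proper subset of $\mathbb{Z}$ and hence has an ascent or descent, and then use $\rho^{\pm1}\in r(F)$ on the set of ``boundary'' points to produce a return that lands on the wrong side of the boundary. Your version has the merit of making the use of the type $III_{1/\rho}$ hypothesis completely explicit --- in the paper's write-up of this direction that hypothesis is never visibly invoked, and Example~\ref{ex:Ftilde_not_ergodic} shows it cannot be dispensed with, so your argument is the more airtight of the two. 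For the converse the paper is more direct and more elementary: it reads off every power of $\rho$ as a ratio-set value straight from the orbits of $\widetilde F$ hitting $E\times\{q\}$, whereas you rule out the competing types ($II$ and $III_{1/\rho^m}$, $m\ge 2$) by importing the closed-subgroup property of $r(F)$ and Schmidt's essential-value/cocycle-reduction theorem to manufacture nontrivial $\widetilde F$-invariant functions $\psi(x,n)$. That machinery is standard and correctly applied, but it is heavier than what the statement needs. Two small points to tighten: in the type $II$ case you should justify the ``normalization'' --- since $h\circ F-h=\pm L_1$ is integer-valued and $F$ is ergodic, the fractional part of $h$ is a.e.\ constant, so subtracting that constant makes $h$ integer-valued; and the sign conventions relating $\frac{dF^n_*\mu}{d\mu}$ to $L_n$ should be fixed once (with the paper's convention, $\frac{dF^N_*\mu}{d\mu}\equiv\rho$ on $W_1$ gives $L_N\equiv+1$, and the contradiction then comes from pushing $(x,0)\in E$ to $(F^Nx,1)\in E^c$ rather than the descent you wrote --- happily your argument works with either sign).
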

\begin{proof}
  Suppose that $F$ is ergodic and type $III_{1/\rho}$.  To establish
  that $\widetilde{F}$ is ergodic, we will show that for every subset of
  $X \times \mathbb{Z}$ of positive measure,
  $\widetilde{\mu}$-almost every point $(x_0, n_0) \in X \times
  \mathbb{Z}$ eventually enters the set.

  Let $E \subseteq X \times \mathbb{Z}$ be any set of positive
  $\widetilde{\mu}$ measure.  As $E = \bigsqcup_{k \in \mathbb{Z}} E_k$ with $E_k
  = E \cap (X \times \{k\})$, there must exist some $k$ with
  $\widetilde{\mu}(E_k) > 0$.
  Let $U \subseteq X \times \mathbb{Z}$ consist of those points $(x,
  n)$ so that $\widetilde{F}^N(x, n) \notin E_k$ for all $N$.  Observe
  that if $U$ has positive $\widetilde{\mu}$-measure, then $\pi(U)$ must have positive
  $\mu$-measure.
  By the assumption that $F$ is ergodic, for $\mu$-almost every point
  $x \in \pi(U)$ there would exist some integer $m$ so that $F^m(x)
  \in \pi(E_k)$.

  For each integer $j \geq 0$, let
  $V_j \subseteq \pi(U)$ consist of those points $x \in \pi(U)$ so
  that $F^j(x) \in \pi(E_k)$ but $F^\ell(x) \notin \pi(E_k)$ for each
  $\ell < j$.  We may further decompose each $V_j$ into subsets $V_j^q$
  where $\frac{dF^j_*\mu}{d\mu}$ equals $\rho^q$ for $\mu$-a.e. $x \in
  V_j^q$.  Observe if $x \in V_j^q$, then $\widetilde{F}^j(x, k - q)
  \in E_k$.  As $\pi(U) = \bigcup_{j,q} V_j^q$ is a countable union,
  at least one $V_j^q$ has positive measure.  This would mean $U \cap
  \pi^{-1}(V_j^q)$ has positive measure, and so a positive measure
  subset of $U$ enters $E_k$, contradicting the definition of $U$.
  Thus $U$ must have had zero measure, and so almost every point of $X
  \times \mathbb{Z}$ eventually enters every set of positive measure,
  and $\widetilde{F}$ is ergodic.

  Suppose now that $\widetilde{F}$ is ergodic.  We will first show
  that $F$ must be ergodic as well by showing an invariant subset $E
  \subseteq X$ has full measure or zero measure.  Notice that
  $\widetilde{E} = \pi^{-1}(E)$ is $\widetilde{F}$-invariant.  Hence
  $\widetilde{E}$ or its complement has zero measure.  Thus either $E$
  or its complement has zero measure as well, since $\pi$ is
  non-singular.

  Now we  must establish that $F$ is type $III_{1/\rho}$.  Let $E$ be any
  subset of $X$ of positive measure, and consider $E \times \{0\}$.
  By ergodicity of $\widetilde{F}$, for each integer $q$ and each $x
  \in E \times \{0\}$ there exists an $m$ so that $\widetilde{F}^m(x, 0) \in E
  \times \{q\}$.  As $F$ is semiconjugate to
  $\widetilde{F}$, this means $F^m(x) \in E$ and
  $\frac{dF_*\mu}{d\mu}(x) = \rho^q$.  Thus all powers of $\rho$ are
  in the ratio set of $F$, and $F$ is type $III_{1/\rho}$.
\end{proof}

\section{Directions for Future Work}
\label{sec:Future}
As previously noted, determining the ratio set of a given
transformation seems to be a very difficult problem in general.  In
order for Theorem~\ref{thm:Ftilde_ergodic} to be useful, we would like
to have some easy-to-check condition that will guarantee a reciprocal
transformation has Krieger type $III_{1/\rho}$.  One possible avenue which
the author is currently pursuing has to do with the case that the
reciprocal transformation $F$ is ``affinely self-similar'' to its
first-return map $F_S$.  To be precise, suppose that $X$ is an
interval, $T$ is an interval exchange, and $\Phi$ is a piecewise
affine involution of $X$.  The reciprocal transformation $F = \Phi T$
is then an affine interval exchange, and it is not difficult to see
that the first-return map $F_S$ is also an affine interval exchange
transformation.  In the special case that $F_S : S \to S$ is simply a
rescaled copy of $F : X \to X$ (this is what we mean by ``affinely
self-similar''), it may be possible to show that $F$ has Krieger type
$III_{1/\rho}$.  In particular, the self-similarity allows us to obtain
scaled copies of the smaller set $S$ into which almost every point of
$X$ must enter.  It may be that this existence of smaller and smaller
subsets, each scaled by a factor of $\rho^{-1}$, will imply that all
sets have subsets that must eventually scale by $\rho^{-n}$ as the
points visit these scaled copies of $S$.

At the moment the author does not know of an example of an interval
exchange $T$ and scaling involution $\Phi$ whose associated reciprocal
transformation $F = \Phi T$ would exhibit this self-similarity.  It is
possible that in fact no such transformation exists, and so simply
establishing the existence or non-existence of these affinely
self-similar transformations seems interesting.  To this end, it would
be helpful to have a better understanding of the moduli space of
reciprocal transformations, at least in the case when $X$ is an
interval and $T$ is an interval exchange with some fixed number of
intervals of continuity.  Once this moduli space is understood, a
transformation on that space similar to Rauzy-Veech induction for
traditional interval exchanges might be helpful to search for
self-similar reciprocal transformations.  To date no such procedure
exists for general affine interval exchanges, but perhaps in this
restricted case such a procedure could be defined.

\providecommand{\bysame}{\leavevmode\hbox to3em{\hrulefill}\thinspace}
\providecommand{\MR}{\relax\ifhmode\unskip\space\fi MR }
\providecommand{\MRhref}[2]{%
  \href{http://www.ams.org/mathscinet-getitem?mr=#1}{#2}
}
\providecommand{\href}[2]{#2}

\end{document}